\titleformat{\subsection}{\it}{\thesubsection.\enspace}{1pt}{}
\newtheorem{theo}{Theorem}[section]
\newtheorem{defi}[theo]{Definition}
\newtheorem{prop}[theo]{Proposition}
\newtheorem{rema}[theo]{Remark}
\numberwithin{equation}{section}
\begin{document}
\title{Analyticity of the Cauchy problem and persistence properties for a generalized Camassa-Holm equation
\hspace{-4mm}
}

\author{Xi $\mbox{Tu}^1$\footnote{E-mail: tuxi@mail2.sysu.edu.cn} \quad and\quad
 Zhaoyang $\mbox{Yin}^{1,2}$\footnote{E-mail: mcsyzy@mail.sysu.edu.cn}\\
 $^1\mbox{Department}$ of Mathematics,
Sun Yat-sen University,\\ Guangzhou, 510275, China\\
$^2\mbox{Faculty}$ of Information Technology,\\ Macau University of Science and Technology, Macau, China}

\date{}
\maketitle
\hrule

\begin{abstract}
This paper is mainly concerned with the Cauchy problem for a generalized Camassa-Holm equation with analytic initial data. The analyticity of its solutions is proved in both variables, globally in space and locally in time.
 Then, we present a persistence property for strong solutions to the system. Finally, explicit asymptotic profiles illustrate the optimality of these results.
 \\
\vspace*{5pt}
\noindent {\it 2000 Mathematics Subject Classification}: 35Q53, 35A01, 35B44, 35B65.\\
\vspace*{5pt}
\noindent{\it Keywords}: A generalized Camassa-Holm equation; Analyticity; Persistence property; Strong solutions.
\\\end{abstract}

\vspace*{10pt}

%\phantomsection
%\addcontentsline{toc}{section}{\contentsname}
%Ìí¼ÓĿ¼µ½ÊéÇ©
\tableofcontents

\section{Introduction}
In this paper we consider the Cauchy problem for the following generalized Camassa-Holm equation,
\begin{align}\label{E1}
\left\{
\begin{array}{ll}
u_t-u_{txx}=\partial_x(2+\partial_x)[(2-\partial_x)u]^2,~~~~  t>0,\\[1ex]
u(0,x)=u_{0}(x),
\end{array}
\right.
\end{align}
or equivalently
\begin{align}\label{E2}
\left\{
\begin{array}{ll}
m=u-u_{xx},\\[1ex]
m_t=2m^2+(8u_x-4u)m+(4u-2u_x)m_x+2(u+u_x)^2, ~~ t>0,\\[1ex]
m(0,x)=u(0,x)-u_{xx}(0,x)=m_0(x).
\end{array}
\right.
\end{align}

Note that $G(x)=\frac{1}{2}e^{-|x|}$ and $G(x)\star f =(1-\partial _x^2)^{-1}f$ for all $f \in L^2(\mathbb{R})$ and $G \star m=u$. Then we can rewrite (\ref{E1}) as follows:
\begin{align}\label{E03}
\left\{
\begin{array}{ll}
u_t(t,x)&=4uu_x+G\star[\partial_x(2u_x^2+6 u^{2})+\partial_x^2(u^2_{x})]
\\&=4uu_x-u^2_{x}+G\star[\partial_x(2u_x^2+6 u^{2})+u^2_{x}]
,~~~~  t>0,\\[1ex]
u(0,x)&=u_{0}(x).
\end{array}
\right.
\end{align}

The equation (\ref{E1}) was proposed recently by Novikov in \cite{n1}. It is integrable and belongs to the following class \cite{n1}:
\begin{align}\label{E02}
(1-\partial^2_x)u_t=F(u,u_x,u_{xx},u_{xxx}),
\end{align}
which has attracted much interest, particularly in the possible integrable members of (\ref{E02}).

The most celebrated integrable member of (\ref{E02}) is the well-known Camassa-Holm (CH) equation \cite{Camassa}:
\begin{align}
(1-\partial^2_x)u_t=3uu_x-2u_{x}u_{xx}-uu_{xxx}.
\end{align}

The CH equation can be regarded as a shallow water wave equation \cite{Camassa, Constantin.Lannes}.  It is completely integrable. It is completely integrable. That means that the system can be transformed into a linear flow at constant speed in suitable action-angle variables (in the sense of infinite-dimensional Hamiltonian systems), for a large class of initial data \cite{Camassa,Constantin-P,Constantin.mckean}. It also has a bi-Hamiltonian structure \cite{Constantin-E,Fokas}, and admits exact peaked solitons of the form $ce^{-|x-ct|}$ with $c>0$, which are orbitally stable \cite{Constantin.Strauss}. It is worth mentioning that the peaked solitons present the characteristic for the travelling water waves of greatest height and largest amplitude and arise as solutions to the free-boundary problem for incompressible Euler equations over a flat bed, cf. \cite{Camassa.Hyman,Constantin2,Constantin.Escher4,Constantin.Escher5,Toland}.

The local well-posedness for the Cauchy problem of the CH equation in Sobolev spaces and Besov spaces was discussed in \cite{Constantin.Escher,Constantin.Escher2,d1,Guillermo}. It was shown that there exist global strong solutions to the CH equation \cite{Constantin,Constantin.Escher,Constantin.Escher2} and finite time blow-up strong solutions to the CH equation \cite{Constantin,Constantin.Escher,Constantin.Escher2,Constantin.Escher3}. The existence and uniqueness of global weak solutions to the CH equation were proved in \cite{Constantin.Molinet, Xin.Z.P}. The global conservative and dissipative solutions of CH equation were investigated in \cite{Bressan.Constantin,Bressan.Constantin2}. Finite propagation speed and persistence properties of solutions to the Camassa-Holm equation have been studied in \cite{Constantin1,H-M-P}.

The second celebrated integrable member of (\ref{E02}) is the famous Degasperis-Procesi (DP) equation \cite{D-P}:
\begin{align}\label{dp}
(1-\partial^2_x)u_t=4uu_x-3u_{x}u_{xx}-uu_{xxx}.
\end{align}
The DP
equation can be regarded as a model for nonlinear shallow water
dynamics and its asymptotic accuracy is the same as for the
CH shallow water equation \cite{D-G-H}. The DP equation is integrable and has a bi-Hamiltonian structure \cite{D-H-H}. An inverse scattering approach for the
DP equation was presented in \cite{Constantin.lvanov.lenells,Lu-S}.
Its traveling wave solutions was investigated in \cite{Le,V-P}. \par The
local well-posedness of the Cauchy problem of the DP equation in Sobolev spaces and Besov spaces was established in
\cite{G-L,H-H,y1}. Similar to the CH equation, the
DP equation has also global strong solutions
\cite{L-Y1,y2,y4} and finite time blow-up solutions
\cite{E-L-Y1, E-L-Y,L-Y1,L-Y2,y1,y2,y3,y4}. It also has global weak
solutions \cite{C-K,E-L-Y1,y3,y4}.

\par
Although the DP equation is similar to the
CH equation in several aspects, these two equations are
truly different. One of the novel features of the DP
different from the CH equation is that it has not only
peakon solutions \cite{D-H-H} and periodic peakon solutions
\cite{y3}, but
also shock peakons \cite{Lu} and the periodic shock waves \cite{E-L-Y}.

The third celebrated integrable member of (\ref{E02}) is the known Novikov equation \cite{n1}:
\begin{align}
(1-\partial^2_x)u_t=3uu_{x}u_{xx}+u^2u_{xxx}-4u^2u_x.
\end{align}
The most difference between the Novikov equation and the CH and DP equations is that the former one has cubic nolinearity and the latter ones have quadratic nolinearity.

It was showed that the Novikov equation is integrable, possesses a bi-Hamiltonian structure, and admits exact peakon solutions $u(t,x)=\pm\sqrt{c}e^{|x-ct|}$ with $c>0$ \cite{Hone}.\\
$~~~~~~$ The local well-posedness for the Novikov equation in Sobolev spaces and Besov spaces was studied in \cite{Wu.Yin2,Wu.Yin3,Wei.Yan,Wei.Yan2}. The global existence of strong solutions were established in \cite{Wu.Yin2} under some sign conditions and the blow-up phenomena of the strong solutions were shown in \cite{Wei.Yan2}. The global weak solutions for the Novikov equation were studied in \cite{Laishaoyong,Wu.Yin}.

Recently, the Cauchy problem of (\ref{E1}) in the Besov spaces $B^{s}_{p,r},~s>max\{\frac{1}{p},\frac{1}{2}\}$ and critical Besov space $B^{\frac{1}{2}}_{2,1}$ has been studied in \cite{Tu-Yin1,Tu-Yin2}.
To our best knowledge, analyticity and persistence properties of the Cauchy problem for (\ref{E1}) has not been studied yet.
In this paper we first prove the analyticity of solutions to the system (\ref{E1}) in both variables,
with $x$ on the line $\mathbb{R}$ and $t$ in an neighborhood of zero, provided that the initial data is analytic on $\mathbb{R}$. Furthermore, we show a persistence property of the strong solutions to (\ref{E1}). The analysis of the solutions in weighted spaces is useful to obtain information on their spatial asymptotic behavior.

The paper is organized as follows. In Section 2 we obtain the analytic solutions to the equation (\ref{E1}) on the line. In Section 3 we obtain a persistence result on solutions to (\ref{E1}) in the weight $L^p$ spaces $L^p,\phi:=L^p(\mathbb{R},\phi^ pdx).$ In Section 4, we compute the spatial asymptotic profiles.

\vspace*{2em}
{\bf
Notations.}
Since all space of functions in the following sections are over $\mathbb{R},$ for simplicity, we drop $\mathbb{R}$ in our notations of function spaces if there is no ambiguity.

\section{Analytic solutions}
Setting
$$
f(x)=x^2,~~P_{1}=\partial_{x},~~P_{2}=\partial_{x}(1-\partial^2_{x})^{-1},
$$
we can rewrite (\ref{E03}) in the following form:
\begin{align}\label{31}
\partial_{t}u=2P_{1}(f(u))+2P_{2}(f(u_{x}))+6P_{2}(f(u))+P_{2}P_1(f(u_{x})).
\end{align}
Next, we can transform (\ref{31}) into the following equation:
\begin{align}
\nonumber\partial_{t}u_{1}&=2P_{1}(f(u_{1}))+2P_{2}(f(u_{2}))+6P_{2}(f(u_{1}))+P_{1}P_{2}(f(u_{2}))
\\\nonumber&=P_{1}\bigg(2f(u_{1})\bigg)+P_{2}\bigg(2f(u_{2})+6f(u_{1})\bigg)+P_{1}P_{2}\bigg(f(u_{2})\bigg)
\\\nonumber&=F_{1}(u_{1},u_{2}),
\\\nonumber\partial_{t}u_{2}&=4P_{1}(u_{1}u_{2})-P_{1}(f(u_{2}))+P_{1}P_{2}\bigg(2f(u_{2})+6f(u_{1})\bigg)+P_{2}(f(u_{2}))
\\\nonumber&=F_{2}(u_{1},u_{2}).
\end{align}

Therefore we can get the following system from combining the above equation with our initial value:

\begin{align}\label{E31}
\left\{
\begin{array}{ll}
\partial_{t}u_{1}=F_{1}(u_{1},u_{2}),\\[1ex]
\partial_{t}u_{2}=F_{2}(u_{1},u_{2}),\\[1ex]
u_{1}(0,x)=u_{0}(x),\\[1ex]
u_{1}(0,x)=\partial_{x}u_{0}(x).\\[1ex]
\end{array}
\right.
\end{align}

Define
$U = (u_1, u_2)$ and $F(U) = F(u_1, u_2)=(F_1(u_1, u_2), F_2(u_1, u_2))$.
Then we have
\begin{align}\label{E32}
\left\{
\begin{array}{ll}
\frac{\partial U(t)}{\partial_{t}}=F(t,U(t)),\\[1ex]
U(0)=(u_{0}(x),\partial_{x}u_{0}(x)).\\[1ex]
\end{array}
\right.
\end{align}

Before stating our result, we need to introduce some suitable Banach spaces. For any $s > 0$, we
define
$$E_{s}\triangleq \bigg{\{} u\in C^{\infty}:\|u\|_{s}=\sup_{k\in\mathbb{N}}  \frac{s^k\|\partial^{k}_{x}u\|_{H^1}}{k!/(k+1)^2} <\infty     \bigg{\}}.
$$

Now we have the following analyticity result:
\begin{theo}\label{th1}
Let $u_{0}\in E_{s}$. There exist an $\varepsilon>0$ and a unique solution $u$ of the Cauchy problem (\ref{E03}) that is analytic on $(-\varepsilon,~\varepsilon)\times \mathbb{R}$.
\end{theo}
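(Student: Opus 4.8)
The plan is to recast the system \eqref{E32} as an abstract evolution equation $\partial_t U = F(U)$ in the decreasing scale of Banach spaces $\{E_s\}_{0<s\le s_0}$ and to invoke the abstract Cauchy–Kovalevskaya theorem in the functional-analytic form due to Ovsyannikov, Nirenberg and Nishida. That theorem yields a unique solution analytic in $t$ near the origin with values in $E_s$, once one verifies two hypotheses: first, that for $0<s'<s\le s_0$ the nonlinearity $F$ maps a ball of $E_s$ into $E_{s'}$ and satisfies the loss-of-derivative Lipschitz bound
\begin{align*}
\|F(U)-F(V)\|_{s'}\le \frac{C}{s-s'}\,\|U-V\|_{s},\qquad \|U\|_{s},\|V\|_{s}\le R,
\end{align*}
and second, that $F$ evaluated near the initial value is controlled in every $E_s$ with $s<s_0$. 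Since $F$ is autonomous and $f(0)=0$, the continuity in $t$ and the base-point bound are immediate once the mapping properties below are in hand.

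Everything therefore reduces to the algebraic and operator-theoretic structure of the scale. First I would check that $\{E_s\}$ is decreasing, $\|\cdot\|_{s'}\le\|\cdot\|_{s}$ for $s'\le s$ (so that $E_{s'}\supset E_{s}$), and—crucially—that each $E_s$ is a Banach algebra, $\|uv\|_{s}\le C\|u\|_{s}\|v\|_{s}$. This is exactly where the weight $k!/(k+1)^2$ is tailored to work: applying the Leibniz rule to $\partial_x^k(uv)$, using that $H^1(\mathbb R)$ is itself an algebra, and regrouping, the computation produces a factor $(k+1)^2\sum_{j=0}^k (j+1)^{-2}(k-j+1)^{-2}$, and the elementary estimate $\sum_{j=0}^k (j+1)^{-2}(k-j+1)^{-2}\le C(k+1)^{-2}$ cancels the growth and delivers the algebra bound.

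Next I would record the mapping properties of the two building blocks. The operator $P_1=\partial_x$ loses one derivative, and the quantitative statement I expect is $\|\partial_x u\|_{s'}\le \frac{C}{s-s'}\,\|u\|_{s}$, which follows by expressing $\|\partial_x^{k+1}u\|_{H^1}$ through $\|u\|_s$ and estimating $\sup_k (k+1)(s'/s)^k\le Cs/(s-s')$. By contrast $P_2=\partial_x(1-\partial_x^2)^{-1}$ and the composition $P_1P_2=\partial_x^2(1-\partial_x^2)^{-1}$ carry the Fourier multipliers $i\xi/(1+\xi^2)$ and $-\xi^2/(1+\xi^2)$, both bounded; since they commute with $\partial_x$, they act boundedly on every $E_s$, giving $\|P_2 u\|_{s}\le C\|u\|_{s}$ and $\|P_1P_2 u\|_{s}\le C\|u\|_{s}$ with no loss. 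Combining these facts with the algebra property and the quadratic structure $f(u)=u^2$ of each term in $F_1,F_2$ yields the required Lipschitz bound: the only summands carrying the factor $1/(s-s')$ are $P_1(f(u_1))$ and $P_1(f(u_2)),\,P_1(u_1u_2)$, while every $P_2$- and $P_1P_2$-term is estimated directly in $E_{s'}$ without derivative loss.

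The main obstacle I anticipate is the algebra estimate for $E_s$, since it is the single place where the precise form of the weight is indispensable and where the combinatorial summation must be handled carefully; the loss-of-derivative bound for $\partial_x$ is then routine. With the Lipschitz and base-point hypotheses confirmed, the abstract Cauchy–Kovalevskaya theorem produces, for some $\varepsilon>0$, a unique $U(t)=(u_1,u_2)$ analytic in $t\in(-\varepsilon,\varepsilon)$ valued in $E_s$. Analyticity in $t$ is thus built in; analyticity in $x$ follows because membership in $E_s$ forces $\|\partial_x^k u\|_{H^1}\lesssim k!/s^k$, giving a positive radius of convergence uniform in $x$. Finally, checking that $u_2=\partial_x u_1$ is consistent with the initial data $u_2(0)=\partial_x u_0$ and preserved by the flow, one reads off $u=u_1$ as the desired analytic solution of \eqref{E03} on $(-\varepsilon,\varepsilon)\times\mathbb R$.
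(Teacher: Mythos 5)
Your proposal follows essentially the same route as the paper: rewrite \eqref{E03} as the first-order system \eqref{E32} for $(u,u_x)$, work in the scale $E_s$ with the algebra property and the operator bounds of Proposition \ref{p1}, and apply the abstract Cauchy--Kovalevskaya theorem of Baouendi--Goulaouic/Nishida type by verifying the loss-of-derivative Lipschitz estimate. The only differences are cosmetic: you sketch proofs of the properties the paper cites from the literature, and you observe that $P_1P_2=\partial_x^2(1-\partial_x^2)^{-1}$ is actually bounded on each $E_s$ (the paper instead uses the cruder $\frac{1}{s-s'}$ bound for those terms), either of which suffices.
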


We first study the properties of the space $E_s$. Let us recall some useful properties of
these Banach spaces, as follows.

\begin{prop}\label{p1}\cite{b-g1,h-m1,y-y1}
\\(1) $E_s$ equipped the norm
$||\cdot||_s$ is a Banach space by the completeness of $H^1 $ and the closedness of the differential
operator $\partial_{x}$.
\\(2) The functions in $E_s$ is real analytic on $\mathbb{R}$, namely, $E_{s} \subset C^{\omega}$.
\\(3) For any $0 < s' < s$, $E_s \hookrightarrow E_{s'} $ with $||u||_{s'} \leq ||u||_{s}$.
\\(4) Let $s > 0$. There exists a constant $C > 0$, independent of $s$, such that for any $u$ and $v$ in $E_s$, we get
$$||uv||_s \leq C||u||_s||v||_s,$$
where $C = C(r)$ depends only on $r$.
In particular, for any $s > 0$, we have
$$||f (u)- f (v)||_s = ||u^2 - v^2||_{s} \leq C||u + v||_{s}||u - v||_{s},$$
for any $u,~v \in E_s$.
\\(5) For any $0 < s' < s \leq 1$, we have
\begin{align}
||P_{1}u||_{s'} \leq \frac{1}{s-s'}||u||_s,\\
||P_2u||_s \leq  ||u||_s.
\end{align}
\end{prop}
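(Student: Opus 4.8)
The plan is to verify the five assertions one at a time directly from the norm, which I rewrite as $\|u\|_s=\sup_{k\ge 0}\frac{s^k(k+1)^2}{k!}\|\partial_x^k u\|_{H^1}$; the structural point driving everything is the weight $k!/(k+1)^2$, whose polynomial correction $(k+1)^{-2}$ is precisely what makes the algebra estimate in (4) converge. For (1), I would take a Cauchy sequence $(u_n)$ in $E_s$; for each fixed $k$ the weight forces $(\partial_x^k u_n)_n$ to be Cauchy in $H^1$, so by completeness of $H^1$ it converges to some $v_k$. Closedness of $\partial_x$ on $H^1$ then identifies $v_{k+1}=\partial_x v_k$, whence inductively $v_k=\partial_x^k v_0$, and a routine $\varepsilon/3$ argument shows $v_0\in E_s$ with $u_n\to v_0$ in $\|\cdot\|_s$. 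Assertion (3) is immediate: for $0<s'<s$ one has $(s'/s)^k\le 1$, so each term of the supremum defining $\|u\|_{s'}$ is dominated by the corresponding term of $\|u\|_s$. For (2) I would use the one-dimensional Sobolev embedding $H^1(\mathbb{R})\hookrightarrow L^\infty(\mathbb{R})$ to obtain $\|\partial_x^k u\|_{L^\infty}\le C_0\|\partial_x^k u\|_{H^1}\le C_0\|u\|_s\,k!/s^k$; this Cauchy-type bound on all derivatives, uniform in the base point, gives a Taylor series of radius at least $s$ about every $x\in\mathbb{R}$, hence real analyticity.

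The heart of the matter, and the step I expect to be the main obstacle, is the algebra estimate (4). Starting from the Leibniz rule $\partial_x^k(uv)=\sum_{j=0}^k\binom{k}{j}\partial_x^j u\,\partial_x^{k-j}v$ and the fact that $H^1(\mathbb{R})$ is a Banach algebra (since $1>1/2$, so $\|fg\|_{H^1}\le C_1\|f\|_{H^1}\|g\|_{H^1}$), I would bound the $k$-th term of $\|uv\|_s$, insert the pointwise estimates $\|\partial_x^j u\|_{H^1}\le \|u\|_s\,j!/((j+1)^2 s^j)$ and its analogue for $v$, and observe that the binomial coefficient cancels all the factorials. What remains is
\begin{equation*}
\frac{s^k(k+1)^2}{k!}\,\|\partial_x^k(uv)\|_{H^1}\le C_1\|u\|_s\|v\|_s\sum_{j=0}^{k}\frac{(k+1)^2}{(j+1)^2(k-j+1)^2}.
\end{equation*}
The crux is that this sum is bounded uniformly in $k$: splitting at $j=\lfloor k/2\rfloor$ and using $(k+1)/(k-j+1)\le 2$ on the lower half (and symmetrically on the upper half) reduces it to $\sum_{j\ge 0}(j+1)^{-2}<\infty$, so the sum is at most an absolute constant. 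This is exactly where the correction $(k+1)^{-2}$ in the weight earns its keep: with the bare Gevrey weight $k!$ the analogous sum would equal $k+1$ and diverge. Taking the supremum over $k$ yields $\|uv\|_s\le C\|u\|_s\|v\|_s$ with $C$ independent of $s$, and the consequence for $f(u)-f(v)=(u+v)(u-v)$ follows by applying the product bound to the pair $u+v,\ u-v$.

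Finally, for (5) I would exploit that both $P_1=\partial_x$ and $P_2=\partial_x(1-\partial_x^2)^{-1}$ commute with $\partial_x$. For $P_1$, since $\partial_x^k P_1 u=\partial_x^{k+1}u$, inserting $\|\partial_x^{k+1}u\|_{H^1}\le \|u\|_s(k+1)!/((k+2)^2 s^{k+1})$ and using $(k+1)^3/(k+2)^2\le k+1$ reduces the $k$-th term to $\frac{\|u\|_s}{s}(k+1)(s'/s)^k$; the elementary inequality $(k+1)r^k=\sum_{j=0}^k r^k\le\sum_{j=0}^\infty r^j=1/(1-r)$ for $r=s'/s<1$ then gives $\|P_1 u\|_{s'}\le \|u\|_s/(s-s')$. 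For $P_2$, commutation yields $\partial_x^k P_2 u=P_2\partial_x^k u$, so it suffices to show $P_2$ is bounded on $H^1$ with norm at most $1$; as a Fourier multiplier with symbol $i\xi/(1+\xi^2)$ and the $H^1$-weight $(1+\xi^2)^{1/2}$ commuting with it, its $H^1\to H^1$ norm equals $\sup_\xi |\xi|/(1+\xi^2)=1/2\le 1$, so $\|P_2 u\|_s\le\|u\|_s$. I expect no difficulty in (1), (2), (3) and (5); the entire weight in the definition of $E_s$ is engineered to make the single nontrivial estimate (4) go through.
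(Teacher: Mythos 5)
Your proof is correct in all five parts, and there is nothing to compare it against inside the paper: Proposition 2.2 is stated without proof, quoted from \cite{b-g1,h-m1,y-y1}. Your argument is precisely the standard one from those sources, with every step sound: completeness via termwise $H^1$-limits plus closedness of $\partial_x$ in the distributional sense; analyticity via $H^1(\mathbb{R})\hookrightarrow L^\infty(\mathbb{R})$, which turns the norm bound into $\|\partial_x^k u\|_{L^\infty}\le C_0\|u\|_s\,k!/s^k$ and hence a Taylor expansion of radius at least $s$ at every point; the embedding (3) from $(s'/s)^k\le 1$; the algebra estimate via Leibniz, the algebra property of $H^1(\mathbb{R})$, exact cancellation of $\binom{k}{j}$ against $j!\,(k-j)!$, and the uniform bound on $\sum_{j=0}^k (k+1)^2(j+1)^{-2}(k-j+1)^{-2}$ obtained by splitting at $j=\lfloor k/2\rfloor$ — you are right that this is exactly the role of the $(k+1)^{-2}$ correction, without which the sum grows like $k+1$; the shift-of-index bound $(k+1)^3/(k+2)^2\le k+1$ together with $(k+1)r^k\le(1-r)^{-1}$, $r=s'/s$, giving $\|P_1u\|_{s'}\le\|u\|_s/(s-s')$; and the Fourier multiplier bound $\sup_\xi|\xi|/(1+\xi^2)=\tfrac12\le 1$ for $P_2$ on $H^1$, which commutes with $\partial_x^k$. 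One cosmetic point: the paper's phrase that ``$C=C(r)$ depends only on $r$'' in part (4) is a leftover from another setting (no $r$ appears here); your proof correctly yields $C$ independent of $s$, which is exactly what the abstract Cauchy--Kovalevsky machinery in Section 2 requires.
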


Then Theorem \ref{th1} is a straightforward consequence of the following result.
\begin{theo}\cite{b-g1,h-m1}
 Let $(X_{s}, ||\cdot||_{s})_{0<s\leq1} $ be a scale of decreasing Banach spaces, such that $X_{s} \subset X_{s'}$ with $||\cdot||_{s'}\leq ||\cdot||_{s}$ for any $0 < s' < s$. Consider the Cauchy problem
\begin{align}\label{E33}
\left\{
\begin{array}{ll}
\frac{\mathrm{d} u}{\mathrm{d}_{t}}=F(t,u(t)),\\[1ex]
u(0)=0.\\[1ex]
\end{array}
\right.
\end{align}
Let $T,~ R ~and~ C$ be positive numbers and suppose that $F$ satisfies the following conditions:
\\(a) If for any $0 < s' < s < 1$, the function $t\mapsto u(t)$ is holomorphic on $|t| < T$ and continuous on $|t|\leq T$ with values in $X_{s}$
and $$\sup_{|t|\leq T}||u(t)||_{s} < R,$$
then $t \mapsto F (t, u(t))$ is a holomorphic function on $|t| < T$ with values in $X_{s'}$ .
\\(b) For any $0 < s' < s\leq 1$ and any $u,~v \in B(0, R) \subset X_{s}$, that is, $||u||_s < R,~ ||v||_s < R$, we have
$$\sup_{|t|\leq T}
||F (t, u)-F (t, v)||_{s'}\leq\frac{C}{s - s'}||u-v||_{s}.$$
\\(c) There exists a $M > 0$, such that for any $0 < s < 1$,
$$\sup_{|t|\leq T}||F (t, 0)||_{s} \leq \frac{M}{1 - s}.$$
\end{theo}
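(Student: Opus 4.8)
The plan is to build the solution by Picard iteration and to trade the regularity loss in hypothesis (b) for a shrinking-in-time estimate. The conclusion to be proved (the part cut off in the statement) is that there exist a $T_{0}\in(0,T]$ and a unique $u$ which, for every $s\in(0,1)$, is holomorphic on $\{|t|<T_{0}(1-s)\}$ with values in $X_{s}$ and solves \eqref{E33}. The obstacle is exactly (b): it only bounds $\|F(u)-F(v)\|_{s'}$ by $\frac{C}{s-s'}\|u-v\|_{s}$, with a constant that blows up as $s'\uparrow s$, so no single space $X_{s}$ is invariant and a naive contraction cannot close. I would set $u^{0}\equiv 0$ and $u^{n+1}(t)=\int_{0}^{t}F(\tau,u^{n}(\tau))\,d\tau$; by (a) each iterate is holomorphic in $t$ with values in $X_{s'}$ as long as it stays in $B(0,R)$, and (c) gives the base bound $\|u^{1}(t)\|_{s}\le\frac{M|t|}{1-s}$.

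The heart of the argument, and the step I expect to be most delicate, is the difference estimate
\[
\|u^{n+1}(t)-u^{n}(t)\|_{s}\le \frac{Me}{C}\left(\frac{eC|t|}{1-s}\right)^{n+1},\qquad 0<s<1,
\]
proved by induction on $n$. Unrolling $u^{n+1}-u^{n}=\int_{0}^{t}[F(u^{n})-F(u^{n-1})]$ all the way down to $u^{1}-u^{0}$ produces $n$ nested applications of (b) together with $n$ time integrations; to keep the constants controlled one interposes equally spaced radii $s=\rho_{0}<\rho_{1}<\cdots<\rho_{n}<1$ with $\rho_{k}-\rho_{k-1}=\frac{1-s}{n+1}$, so that each of the $n$ factors coming from (b) equals $\frac{C(n+1)}{1-s}$ and the base term from (c) contributes $\frac{n+1}{1-s}$. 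The iterated time integrals yield $\frac{|t|^{n+1}}{(n+1)!}$, and the combinatorial identity $\frac{(n+1)^{n+1}}{(n+1)!}\le e^{n+1}$ absorbs the dangerous factor $(n+1)^{n+1}$, producing the stated geometric decay. Allocating this radius budget and bookkeeping the constants uniformly in $s$ so that the induction genuinely closes is the technical core of the theorem.

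Once the estimate holds, the series $\sum_{n}(u^{n+1}-u^{n})$ converges in $X_{s}$ uniformly on compact subsets of $\{|t|<T_{0}(1-s)\}$ as soon as $\frac{eC|t|}{1-s}<1$, i.e. with $T_{0}=\min\{T,\frac{1}{eC}\}$ (shrunk further if needed so the same geometric bound keeps the iterates inside $B(0,R)$). Its limit $u$ is holomorphic in $t$ with values in $X_{s}$ by (a) and termwise passage to the limit under the integral, hence solves \eqref{E33}; uniqueness follows by applying the same difference estimate and a Gronwall-type argument in the scale to two solutions. Finally, Theorem \ref{th1} follows by invoking this result for $U=(u_{1},u_{2})$ and $F=(F_{1},F_{2})$ on the scale $X_{s}=E_{s}\times E_{s}$, after rescaling so that $u_{0}$ sits at the top radius and shifting $U\mapsto U-(u_{0},\partial_{x}u_{0})$ to reduce the data to $0$: Proposition \ref{p1}(4) makes the quadratic nonlinearity Lipschitz on balls, while (5) supplies both the $\frac{1}{s-s'}$ loss in (b) through $P_{1}$ (with $P_{2}$ bounded) and, applied to the data-dependent terms, the $\frac{1}{1-s}$ bound in (c); so (a)--(c) hold, fixing $s$ below the radius of analyticity of $u_{0}$ turns $T_{0}(1-s)$ into a fixed $\varepsilon>0$, and Proposition \ref{p1}(2) upgrades the $X_{s}$-valued solution to joint analyticity on $(-\varepsilon,\varepsilon)\times\mathbb{R}$.
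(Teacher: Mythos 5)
The first thing to note is that the paper never proves this statement at all: it is quoted as a known result from \cite{b-g1,h-m1}, and the proof environment that follows it in the paper is actually the proof of Theorem \ref{th1}, i.e.\ the verification of hypotheses (a)--(c) for the system \eqref{E31}. So you are supplying a proof for something the paper treats as a black box. Your reconstruction of the missing conclusion (a unique solution, holomorphic on $|t|<T_{0}(1-s)$ with values in $X_{s}$ for every $s\in(0,1)$) is the correct one, and your closing paragraph about how Theorem \ref{th1} follows (shift the data to zero, use Proposition \ref{p1} to check (a)--(c)) matches what the paper actually does.

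However, your proof of the abstract theorem itself has a genuine gap, located exactly at the step you call the technical core. Hypothesis (b) is a Lipschitz estimate \emph{only on the ball} $B(0,R)\subset X_{s}$. When you unroll $u^{n+1}-u^{n}$ all the way down with equally spaced radii $s=\rho_{0}<\rho_{1}<\dots<\rho_{n}$, $\rho_{k}-\rho_{k-1}=\frac{1-s}{n+1}$, the innermost application of (b), at the pair of radii $(\rho_{n-1},\rho_{n})$, is applied to $(u^{1},u^{0})$ and therefore requires $\|u^{1}(\tau)\|_{\rho_{n}}<R$. But the only bound available for $u^{1}$ comes from (c): $\|u^{1}(\tau)\|_{\rho_{n}}\le M|\tau|/(1-\rho_{n})=M(n+1)|\tau|/(1-s)$, which exceeds $R$ for every fixed $\tau\neq 0$ as soon as $n>R(1-s)/(M|\tau|)$; indeed (c) allows $F(t,0)$ to genuinely grow like $M/(1-\rho)$ as $\rho\to 1$, so $u^{1}(t)$ really need not lie in $B(0,R)\subset X_{\rho_{n}}$ for large $n$. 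Shrinking $T_{0}$ cannot repair this, because the constraint is $n$-dependent: for any fixed $t\neq 0$ only finitely many of your difference estimates are justified, so the geometric series never gets off the ground. The same circularity infects your ball bookkeeping: even granting your geometric bounds, $\sum_{j}\frac{Me}{C}\bigl(eC|\tau|/(1-\rho_{n})\bigr)^{j}<R$ again forces $|\tau|\lesssim (1-s)/(n+1)$. What you have written is essentially the classical proof of the \emph{linear} Ovsyannikov theorem, where (b) holds with no ball restriction and the equal-spacing/$(n+1)^{n+1}/(n+1)!\le e^{n+1}$ unrolling is exactly right; the nonlinear theorem is harder precisely because of the ball. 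The proofs in \cite{b-g1,h-m1} (going back to Nirenberg and Nishida) avoid full unrolling: they make a \emph{single} application of (b) per induction step, with an intermediate radius depending on the integration variable (e.g.\ the midpoint between $\rho$ and $1-|\tau|/a$), and run the induction (or a contraction in a weighted norm) on the cone $\{|t|<a(1-\rho)\}$ with bounds that degrade near its boundary, so that the arguments of $F$ stay in $B(0,R)$ at every stage. That mechanism is the actual content of the theorem, and your argument does not contain it.
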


Now we set
$$||(u_1, u_2)||_{s}\triangleq \sum_{i=1}^{2}||u_{i}||_{s},$$
and
$$||F (u_1, u_2)||_{s} \triangleq \sum_{i=1}^{2}||F_{i}(u_1, u_2)||_{s}.$$

\begin{proof}
To complete the proof of Theorem \ref{th1}, it suffices to verify the conditions $(a)-(c)$ above.

We first focus on checking the conditions $(a)$. To check the condition $(a)$, we only need to derive that
 $\|F (U)\|_{s'}<\infty,$ when $\sup_{|t|\leq T}\|U\|_{s}<\infty.$

For any $u_j \in E_{s},~ (j = 1, 2),$ we obtain
\begin{align}
\nonumber\|F (U)\|_{s'}=& \|F (u_1, u_2)\|_{s'}
\\\nonumber=&\|F_1(u_1, u_2)\|_{s'}+\|F_2(u_1, u_2)\|_{s'}
\\\nonumber=&\|P_{1}\bigg(2f(u_{1})\bigg)+P_{2}\bigg(2f(u_{2})+6f(u_{1})\bigg)+P_{1}P_{2}\bigg(f(u_{2})\bigg)\|_{s'}
\\\nonumber&+\|4P_{1}(u_{1}u_{2})-P_{1}(f(u_{2}))+P_{1}P_{2}\bigg(2f(u_{2})+6f(u_{1})\bigg)+P_{2}(f(u_{2}))\|_{s'}
\\\nonumber=&\frac{2}{s-s'}\|u_{1}^{2}\|_{s}+2\|u_{2}^{2}\|_{s'}+6\|u_{1}^{2}\|_{s'}+\frac{1}{s-s'}\|u_{2}^{2}\|_{s}
\\\nonumber&+\frac{4}{s-s'}\|u_{1}u_{2}\|_{s}+\frac{1}{s-s'}\|u_{2}^{2}\|_{s}+\frac{2}{s-s'}\|u_{2}^{2}\|_{s}
+\frac{6}{s-s'}\|u_{1}^{2}\|_{s}+\|u_{2}^{2}\|_{s'}
\\\nonumber\leq&\frac{C}{s-s'}(\|u_{1}\|_{s}+\|u_{2}\|_{s})^2
\\\nonumber\leq&\frac{C}{s-s'}\|(u_{1},u_{2})\|^2_{s},
\end{align}
which means that system (\ref{E32}) with zero initial datum satisfies the condition $(a)$.
This completes the proof of the condition $(a)$.

Next, we consider the condition $(b)$.

For any $u_j$ and $v_j \in B(0, R) \subset E_{s},~ (j = 1, 2),$ we get
\begin{align}
\nonumber\|F (U)- F (V)\|_{s'}=& \|F (u_1, u_2) - F(v_1, v_2)\|_{s'}
\\\nonumber&=\|F_1(u_1, u_2) - F_1(v_1, v_2)\|_{s'}+\|F_2(u_1, u_2) - F_2(v_1, v_2)\|_{s'}
\\\nonumber&\triangleq I_1 + I_2.
\end{align}

Then, we will estimate $I_1$ and $I_2$ respectively. By Proposition \ref{p1}, yields
\begin{align}
I_{1}\nonumber\leq& \|2P_{1}[f(u_{1})-f(v_{1})]\|_{s'}+\|2P_{2}[f(u_{2})-f(v_{2})]\|_{s'}
\\\nonumber&+6\|P_{2}[f(u_{1})-f(v_{1})]\|_{s'}+\|P_{1}P_{2}[f(u_{2})-f(v_{2})]\|_{s'}
\\\nonumber\leq&\frac{2}{s-s'}\|f(u_{1})-f(v_{1})\|_{s}+2\|f(u_{2})-f(v_{2})\|_{s'}
\\\nonumber&+6\|f(u_{1})-f(v_{1})\|_{s'}+\frac{1}{s-s'}\|f(u_{2})-f(v_{2})\|_{s}
\\\nonumber\leq&\frac{2C}{s-s'}\|u_{1}+v_{1}\|_{s}\|u_{1}-v_{1}\|_{s}+2C\|u_{2}+v_{2}\|_{s'}\|u_{2}-v_{2}\|_{s'}
\\\nonumber&+6C\|u_{1}+v_{1}\|_{s'}\|u_{1}-v_{1}\|_{s'}+\frac{C}{s-s'}\|u_{2}+v_{2}\|_{s}\|u_{2}-v_{2}\|_{s}
\\\leq&\frac{C(r,R)}{s-s'}||(u_1, u_2)-(v_1, v_2)||_{s},
\\\nonumber I_{2}\leq& \|4P_{1}(u_{1}u_{2}-v_{1}v_{2})\|_{s'}+\|P_{1}[f(u_{2})-f(v_{2})]\|_{s'}
\\\nonumber&+\|2P_{1}P_{2}[f(u_{2})-f(v_{2})]\|_{s'}
\\\nonumber&+\|6P_{1}P_{2}[f(u_{1})-f(v_{2})]\|_{s'}+\|P_{2}[f(u_{2})-f(v_{2})]\|_{s'}
\\\nonumber\leq&\frac{4}{s-s'}\|u_{1}u_{2}-v_{1}v_{2}\|_{s}+\frac{1}{s-s'}\|f(u_{2})-f(v_{2})\|_{s}
\\\nonumber&+\frac{2}{s-s'}\|f(u_{2})-f(v_{2})\|_{s}
\\\nonumber&+\frac{6}{s-s'}\|f(u_{1})-f(v_{1})\|_{s}+\|f(u_{2})-f(v_{2})\|_{s'}
\\\nonumber\leq&\frac{4}{s-s'}(\|u_{2}\|_{s}\|u_{1}-v_{1}\|_{s}+\|v_{1}\|_{s}\|u_{2}-v_{2}\|_{s})
+\frac{C}{s-s'}\|u_{2}+v_{2}\|_{s}\|u_{2}-v_{2}\|_{s}
\\\nonumber&+\frac{2C}{s-s'}\|u_{2}+v_{2}\|_{s}\|u_{2}-v_{2}\|_{s}
\\\nonumber&+\frac{6C}{s-s'}\|u_{1}+v_{1}\|_{s}\|u_{1}-v_{1}\|_{s}+C\|u_{2}+v_{2}\|_{s'}\|u_{2}-v_{2}\|_{s'}
\\\leq&\frac{C(r,R)}{s-s'}||(u_1, u_2)-(v_1, v_2)||_{s}.
\end{align}

This completes the proof of the condition $(b)$.

The condition $(c)$ can be easily obtained once  our system (\ref{E31}) or (\ref{E32}) is transformed into a new system with zero initial data as in (\ref{E33}). This complete the proof of Theorem \ref{th1}.

\end{proof}

\section{Persistence properties}
In this section, we shall discuss the persistence properties for a generalized Camassa-Holm equation (\ref{E1}) in weighted $L^p$ spaces.
We can first draw some standard definitions.
\begin{defi}
(1) In general a weight function is simply a non-negative function.
\\(2) A weight function $v : \mathbb{R} \rightarrow \mathbb{R}$ is sub-multiplicative if
$$v(x + y)\leq v(x)v(y),~~ \forall x, y \in \mathbb{R}.$$
(3) Given a sub-multiplicative function $v$, by definition a positive function $\phi$ is $v$-moderate
if and only if
$$\exists~~ C_{0} > 0:~ \phi(x + y) \leq C_{0} ~v(x)\phi(y), ~~~\forall x, y \in \mathbb{R}.$$
We say that $\phi$ is moderate which means $\phi$ is $v$-moderate for some sub-multiplicative function $v$.
\end{defi}
Let us recall the most standard examples of such weights:
$$\phi(x) = \phi_{a,b,c,d}(x) = e^{a|x|^b}(1 + |x|)^c \log(e + |x|)^d.
$$
We have (see \cite{Brandolese}):
\\(1) For $a, c, d \geq 0$ and $ 0\leq b \leq 1$, such weight is sub-multiplicative.
\\(2) If $a, c, d \in \mathbb{R}$ and $0 \leq b \leq 1$, then $\phi$ is moderate. More precisely, $\phi_{a,b,c,d} $ is $\phi_{\alpha,~\beta,~\gamma,~\delta} -moderate$ for
$|a| \leq \alpha,~ b \leq \beta,~ |c| \leq \gamma,~and~ |d|\leq \delta $.

Then, we give the definition for an admissible weight function:
\begin{defi}\label{def1}
An admissible weight function for (\ref{E1}) is a locally absolutely continuous function $\phi:\mathbb{R}\rightarrow\mathbb{R}$ which satisfy
\\(1) $\phi$ is $v$-moderate where $v$ is some sub-multiplicative weight function such that
\begin{align}
\nonumber
\inf_{x\in\mathbb{R}}v(x)>0,
\end{align}~~and~~
\begin{align}\label{052}
\int_{\mathbb{R}}\frac{v(x)}{e^{|x|}}dx<\infty.
\end{align}
\\(2) $|\phi'(x)|\leq A|\phi(x)|$, for some $A>0$ and a.e. $x\in \mathbb{R}$.
\end{defi}
Let us recall the following useful proposition which will be used in the proof of persistence properties.
\begin{prop}\label{066}\cite{Brandolese}
\\(1) Let $v : \mathbb{R}^{n} \rightarrow \mathbb{R}^{+} $ and $C_0 > 0$. Then following conditions are equivalent:

 (i) $\forall~ x, y : v(x + y) \leq C_{0} v(x)v(y)$.

 (ii) For all $1 \leq p, q, r \leq \infty $ and for any measurable functions $f_{1}, f_{2} : \mathbb{R}^n \rightarrow \mathbb{C} $ the weighted Young inequalities hold:
$$\|(f_1 \ast f_2)v\|_{r} \leq C_{0}\|f_1v\|_{p}\|f_2v\|_{q},~~~~~ 1 +\frac{1}{r}=\frac{1}{p}+\frac{1}{q}.$$
\\(2) Let $1 \leq p \leq \infty$ and $v$ be a sub-multiplicative weight on $\mathbb{R}^{n}$. The following two conditions are equivalent:

(i) $\phi$ is a $v-moderate$ weight function (with constant $C_0$).

(ii) For all measurable functions $f_1$ and $f_2$ the weighted Young estimate holds
 $$\|(f_1 \ast f_2)\phi\|_{p} \leq C_{0}\|f_1v\|_{1}\|f_2\phi\|_{p}.$$
\end{prop}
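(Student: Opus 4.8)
The plan is to reduce both equivalences to the classical (unweighted) Young convolution inequality, via a pointwise domination of the weighted convolution in the forward directions, and to recover the weight hypotheses in the converse directions by testing the estimates against concentrating families of indicator functions. For part (1), the implication (i) $\Rightarrow$ (ii) is the one actually used in the sequel and is the heart of the matter. I would start from $(f_1 \ast f_2)(x) = \int_{\mathbb{R}^n} f_1(x-y) f_2(y)\,dy$, multiply by $v(x)$, and write $x = (x-y)+y$; sub-multiplicativity (i) then gives $v(x) \le C_0 v(x-y) v(y)$, whence the pointwise bound
$$|(f_1 \ast f_2)(x)|\,v(x) \le C_0\big((|f_1|v) \ast (|f_2|v)\big)(x).$$
Taking $L^r$ norms and applying the classical Young inequality $\|g_1 \ast g_2\|_r \le \|g_1\|_p \|g_2\|_q$ under the relation $1 + 1/r = 1/p + 1/q$ to $g_1 = |f_1|v$ and $g_2 = |f_2|v$ yields (ii) exactly.

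For the converse (ii) $\Rightarrow$ (i) I would specialize to $p = q = r = 1$, which satisfies the exponent relation, and test on indicators of shrinking cubes. Fixing $x_0, y_0$ and a cube $Q_\delta$ of side $\delta$ centered at the origin, set $f_1 = \chi_{x_0 + Q_\delta}$ and $f_2 = \chi_{y_0 + Q_\delta}$. At Lebesgue points one has $\|f_1 v\|_1 \approx \delta^n v(x_0)$ and $\|f_2 v\|_1 \approx \delta^n v(y_0)$, while $\int (f_1 \ast f_2)\,v \approx \delta^{2n} v(x_0 + y_0)$ as $\delta \to 0$, since $f_1 \ast f_2$ concentrates near $x_0 + y_0$ with total mass $\|f_1\|_1\|f_2\|_1 = \delta^{2n}$. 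Dividing by $\delta^{2n}$ and letting $\delta \to 0$ recovers $v(x_0 + y_0) \le C_0 v(x_0) v(y_0)$ for a.e.\ $x_0, y_0$, hence everywhere after modifying $v$ on a null set (or directly, if $v$ is taken continuous).

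Part (2) is entirely analogous. For (i) $\Rightarrow$ (ii), the $v$-moderateness $\phi(x) \le C_0 v(x-y)\phi(y)$ gives the pointwise bound $|(f_1 \ast f_2)\phi| \le C_0 \big((|f_1|v)\ast(|f_2|\phi)\big)$, and the special case $p_1 = 1,\, p_2 = p$ of Young's inequality (convolution with an $L^1$ function is bounded on $L^p$) yields $\|(f_1 \ast f_2)\phi\|_p \le C_0\|f_1 v\|_1 \|f_2 \phi\|_p$. The converse is obtained again by inserting concentrating indicators and extracting the moderateness inequality $\phi(x+y) \le C_0 v(x)\phi(y)$ in the limit $\delta \to 0$.

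The main obstacle I anticipate is not the forward (and genuinely used) implications, whose only ingredient beyond a two-line pointwise estimate is classical Young, but rather the converse directions: making the concentration argument rigorous requires controlling $v$ and $\phi$ at Lebesgue points and passing from an a.e.\ inequality to one valid for all $x, y$. Since only the forward implications are invoked in the persistence arguments of Section~3, I would present those in full and treat the converses more briefly.
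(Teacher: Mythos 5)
Your proposal is correct, but note that the paper contains no proof of this proposition at all: it is imported verbatim from the cited reference \cite{Brandolese} (where it in turn goes back to standard results on weighted Young inequalities), so there is no in-paper argument to compare against. Your forward implications are exactly the standard proof and the only ones the paper ever uses: the pointwise domination
$|(f_1\ast f_2)(x)|\,w(x)\le C_0\bigl((|f_1|v)\ast(|f_2|w)\bigr)(x)$ with $w=v$ (resp.\ $w=\phi$), obtained by writing $x=(x-y)+y$ and invoking sub-multiplicativity (resp.\ $v$-moderateness), followed by the classical Young inequality with the stated exponents. Your converse sketch via concentrating indicators is also the standard one; the only point needing care is the one you yourself flag, namely that the approximate-identity argument yields $v(x_0+y_0)\le C_0 v(x_0)v(y_0)$ (and likewise the moderateness inequality) only at Lebesgue points, hence a.e., and upgrading to ``for all $x,y$'' requires either continuity of the weight or a modification on a null set. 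Since the admissible weights used in Section~3 of the paper are locally absolutely continuous, this caveat is harmless in context, and presenting the forward directions in full while treating the converses briefly, as you propose, is a reasonable and faithful way to supply the missing proof.
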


We can now state our main result on an admissible weight function.

\begin{theo} \label{thm5}
Let $T>0$, $s>\frac{5}{2}$, and $2\leq p \leq \infty$. Let also $u\in C([0,T],H^s)$ be a strong solution of the Cauchy problem for (\ref{E1}), such that $u|_{t=0}=u_{0}$ satisfies
$$u_{0}\phi \in L^{p},~~ ~(\partial_{x}u_{0})\phi \in L^{p}~~ ~and~ ~~(\partial_{xx}u_{0})\phi \in L^{p},
$$
where $\phi$ is an admissible weight function for (\ref{E1}). Then, for all $t\in ~[0,T]$, we have the estimate,
$$\|u(t)\phi\|_{p}+\|\partial_{x}u(t)\phi\|_{p}+\|\partial_{xx}u(t)\phi\|_{p}
\leq\bigg(\|u_{0}\phi\|_{p}+\|\partial_{x}u_{0}\phi\|_{p}+\|\partial_{xx}u_{0}\phi\|_{p}\bigg)e^{CMt},$$
for some constant  $C>0$ depending only on $v,~\phi(A,~~C_0,~~\inf_{x\in \mathbb{R}}v(x),~~and~~\\\int_{\mathbb{R}}\frac{v(x)}{e^{|x|}}dx<\infty)$, and
$$M\equiv \sup_{t\in[0,T]}(\|u(t)\|_{\infty}+\|\partial_{x}u(t)\|_{\infty}+\|\partial_{xx}u(t)\|_{\infty})<\infty.$$
\end{theo}

The standard weights $\phi=\phi_{a,b,c,d}(x)=e^{a|x|^b}(1+|x|)^clog(e+|x|)^d$ is the basic example of the application of Theorem \ref{thm5}, if it satisfies the following conditions:
$$a\geq0,~~~~~c,d\in \mathbb{R},~~~~~b=1,~~~~~ab<1.$$
The restriction $ab<1$ guarantees the validity of the condition (\ref{052}) for a multiplicative function $v (x)\geq 0$.
The restriction $b=1$ guarantees that $|\phi'(x)|\leq A|\phi(x)|.$ When $0< b<1$, we have $|\phi'|\rightarrow \infty$, as $x\rightarrow 0.$
\begin{proof}
Assume that $\phi$ is the admissible weight function.
Applying the assumption $u\in C([0,T],H^{s})$, with $s>\frac{5}{2}$, we get
$$M\equiv \sup_{t\in[0,T]}(\|u(t)\|_{\infty}+\|\partial_{x}u(t)\|_{\infty}+\|\partial_{xx}u(t)\|_{\infty})<\infty.$$

The first equation in (\ref{E03}) can be rewritten as:
\begin{align}\label{053}
u_t-4uu_{x} - \partial_{x}G\ast [2u_{x}^2+6u^2+\partial_{x}(u_{x}^2)]=0,
\end{align}
with the kernel $G(x)=\frac{1}{2}e^{-|x|}$.
Let us consider the N-truncations of $\phi_{N}(x)=\min\{\phi,N\}$ for any $N\in \mathbb{Z}^{+}.$
Note that $\phi_{N}:\mathbb{R}\rightarrow \mathbb{R}$ is a locally absolutely continuous function which satisfies
\begin{align}\label{056}
\|\phi_{N}(x)\|_{\infty}\leq N,~~~|\phi_{N}'(x)|\leq A|\phi_{N}(x)|~~~~a.e. on ~\mathbb{R},
\end{align}
and
$$\phi_{N}(x+y)\leq C_{1}v(x)\phi_{N}(y),~~~~\forall x,y\in \mathbb{R},$$
where $C_1=\max\{C_0,\alpha^{-1}\},~~\alpha=\inf_{x\in \mathbb{R}}v(x)>0.$

As known in \cite{Brandolese}, the $N$-truncations $\phi_{N}$ of a $v$-moderate weight $\phi$ are uniformly $v$-moderate with respect to $N$.

Considering the case $2\leq p<\infty$. First, we give estimates on $u\phi_{N}$.  Multiply (\ref{053}) by $\phi_{N}|u\phi_{N}|^{p-2}(u\phi_{N})$ and integrate to obtain
\begin{align}
\frac{1}{p}&\nonumber\frac{\mathrm{d}}{\mathrm{d}t}\bigg(\|u\phi_{N}\|_{p}^{p}\bigg)-
4\int_{\mathbb{R}}|u\phi_{N}|^{p}(\partial_{x}u)\mathrm{d}x
\\&-\int_{\mathbb{R}}|u\phi_{N}|^{p-2}(u\phi_{N})(\phi_{N} G_{x}\ast [2u_{x}^2+6u^2+\partial_{x}(u_{x}^2)])\mathrm{d}x=0.
\end{align}
The two above integrals are clearly finite since $\phi_{N}\in L^{\infty}(\mathbb{R})$. And $u(\cdot, t) \in H^s, s > \frac{5}{2}$ leads to
$\partial_{x}G\ast [2u_{x}^2+6u^2+\partial_{x}(u_{x}^2)] \in L^1\cap L^{\infty}.$
Applying H\"{o}lder's inequality, Proposition \ref{066} and the condition (\ref{052}), it follows that
\begin{align}\label{057}
\frac{\mathrm{d}}{\mathrm{d}t}\|u\phi_{N}\|_{p}\nonumber &\leq 4M\|u\phi_{N}\|_{p}+\|\phi_{N} G_{x}\ast [2u_{x}^2+6u^2+\partial_{x}(u_{x}^2)]\|_{p}
\\\nonumber &\leq 4M\|u\phi_{N}\|_{p}+C_{1}\|G_{x}v\|_{L^1}\|[2u_{x}^2+6u^2+\partial_{x}(u_{x}^2)]\phi_{N}\|_{p}
\\\nonumber &\leq 4M\|u\phi_{N}\|_{p}+C_{2}\| [2u_{x}^2+6u^2+\partial_{x}(u_{x}^2)]\phi_{N}\|_{p}
\\ &\leq M(4+6C_{2})\|u\phi_{N}\|_{p}+6MC_{2}\| u_{x}\phi_{N}\|_{p},
\end{align}
where $C_{2}$ depends only on $v$ and $\phi$.

Next, we will give estimates on $u_x\phi_{N}$. Differentiating (\ref{053}) with respect to $x$-variable, then multiplying by $\phi_{N}$, yields
\begin{align}\label{055}
\partial_{t}(u_{x}\phi_{N})-4u_{x}(u_{x}\phi_{N})-4u\phi_{N}\partial_{x}^2u +2u_{x}\phi_{N}\partial_{x}^2u- \phi_{N}\partial_{x}G\ast [\partial_{x}(2u_{x}^2+6u^2)+u_{x}^2]=0.
\end{align}
Multiplying the above equation by $|u_{x}\phi_{N}|^{p-2}(u_{x}\phi_{N})$, integrating the result equation in $x$-variable, and using integration by parts, we obtain
\begin{align}
\frac{1}{p}&\nonumber\frac{\mathrm{d}}{\mathrm{d}t}\bigg(\|u_{x}\phi_{N}\|_{p}^{p}\bigg)
-4\int_{\mathbb{R}}|u_{x}\phi_{N}|^{p}(\partial_{x}u)\mathrm{d}x
-4\int_{\mathbb{R}}\partial^2_{x}u|u_{x}\phi_{N}|^{p-2}(u_{x}\phi_{N})(u\phi_{N})\mathrm{d}x
\\&+2\int_{\mathbb{R}}|u_{x}\phi_{N}|^{p}(\partial^2_{x}u)\mathrm{d}x-\int_{\mathbb{R}}|u_{x}\phi_{N}|^{p-2}(u_{x}\phi_{N})
(\phi_{N} \partial_{x}G \ast [\partial_{x}(2u_{x}^2+6u^2)+u_{x}^2])\mathrm{d}x=0.
\end{align}
Arguing as before, the two above integrals are clearly finite because of $\phi_{N}\in L^{\infty}(\mathbb{R})$. And $u(\cdot, t) \in H^s, s > \frac{5}{2}$ leads to $\partial_{x}G\ast [\partial_{x}(2u_{x}^2+6u^2)+u_{x}^2] \in L^1\cap L^{\infty}.$
Applying H\"{o}lder's inequality, Proposition \ref{066} and condition (\ref{052}), we obtain
\begin{align}\label{058}
\frac{\mathrm{d}}{\mathrm{d}t}\|u_{x}\phi_{N}\|_{p}\nonumber &\leq 6M(\|u_{x}\phi_{N}\|_{p}+\|u\phi_{N}\|_{p})+\|\phi_{N} G_{x}\ast [\partial_{x}(2u_{x}^2+6u^2)+u_{x}^2]\|_{p}
\\\nonumber &\leq 6M(\|u_{x}\phi_{N}\|_{p}+\|u\phi_{N}\|_{p})+C_{1}\|G_{x}v\|_{L^1}\|
[\partial_{x}(2u_{x}^2+6u^2)+u_{x}^2]\phi_{N}\|_{p}
\\\nonumber &\leq 6M(\|u_{x}\phi_{N}\|_{p}+\|u\phi_{N}\|_{p})+C_{2}\| [\partial_{x}(2u_{x}^2+6u^2)+u_{x}^2]\phi_{N}\|_{p}
\\&\leq 6M\|u\phi_{N}\|_{p}+M(12C_{2}+6)\| u_{x}\phi_{N}\|_{p},
\end{align}
where $C_{2}$ depends only on $v$ and $\phi$.

Then, we will give estimates on $u_{xx}\phi_{N}$. Differentiating (\ref{053}) twice with respect to $x$-variable, next multiplying by $\phi_{N}$, we get
\begin{align}\label{44}
\nonumber\frac{\mathrm d}{\mathrm{d}t}(\phi_{N}u_{xx})(t)=&\phi_{N}(4u-2u_{x})u_{xxx}
-2\phi_{N}u^2_{xx}+8\phi_{N}u_{x}u_{xx}-12\phi_{N}uu_{x}
\\&+\phi_{N}\partial_{x}G\ast[(2u_x^2+6u^{2})+\partial_{x}(u^2_{x})].
\end{align}

Multiplying the above equation by $|\phi_{N}u_{xx}|^{p-2}(\phi_{N}u_{xx})$, integrating the result equations in the $x$-variable, and using integration by parts and $(\ref{056})$, implies
\begin{align}\label{059}
\frac{\mathrm{d}}{\mathrm{d}t}\|u_{xx}\phi_{N}\|_{p}\nonumber &\leq (4A+10)M\|\phi_{N}u_{xx}\|_{p}
+12M\|\phi_{N}u_{x}\|_{p}
+\|\phi_{N} G_{x}\ast [(2u_{x}^2+6u^2)+\partial_{x}(u_{x}^2)]\|_{p}
\\\nonumber &\leq (4A+10)M\|\phi_{N}u_{xx}\|_{p}+12M\|\phi_{N}u_{x}\|_{p}
+C_{1}\|G_{x}v\|_{L^1}\|
[\partial_{x}(2u_{x}^2+6u^2)+u_{x}^2]\phi_{N}\|_{p}
\\\nonumber &\leq (4A+10)M\|\phi_{N}u_{xx}\|_{p}+12M\|\phi_{N}u_{x}\|_{p}
+C_{2}\| [\partial_{x}(2u_{x}^2+6u^2)+u_{x}^2]\phi_{N}\|_{p}
\\ &\leq (4A+12)M\|\phi_{N}u_{xx}\|_{p}+24MC_{2}\| u_{x}\phi_{N}\|_{p},
\end{align}
where we use
\begin{align}\label{43}
\nonumber|\int_{\mathbb{R}}&\phi_{N}(2u_{x}-4u)u_{xxx}|\phi_{N}u_{xx}|^{p-2}(\phi_{N}u_{xx})\mathrm dx|
\\\nonumber
=&|\int_{\mathbb{R}}(2u_{x}-4u)[\partial_{x}(\phi_{N}u_{xx})-\phi_{N}'u_{xx}]|\phi_{N}u_{xx}|^{p-2}
(\phi_{N}u_{xx})\mathrm dx|
\\ \nonumber
\leq&|\frac{1}{p}\int_{\mathbb{R}}(2u_{x}-4u)\partial_{x}(|\phi_{N}u_{xx}|^{p})\mathrm dx|+|\int_{\mathbb{R}}(2u_{x}-4u)\phi_{N}'u_{xx}|\phi_{N}u_{xx}|^{p-2}(\phi_{N}u_{xx})\mathrm dx|
\\ \nonumber
\leq&|\frac{1}{p}\int_{\mathbb{R}}(2u_{xx}-4u_{x})(|\phi_{N}u_{xx}|^{p})\mathrm dx|+A\int_{\mathbb{R}}|2u_{x}-4u||\phi_{N}u_{xx}|^{p}\mathrm dx
\\ \nonumber
\leq& \frac{1}{p}(2\|u_{xx}\|_{L^\infty}+4\|u_{x}\|_{L^\infty})\|\phi_{N}u_{xx}\|_{L^{p}}^{p}
\\ \nonumber
&+A(2\|u_{x}\|_{L^\infty}+4\|u_{}\|_{L^\infty})\|\phi_{N}u_{xx}\|_{p}^{p}
\\ \leq& (4A+2)M\|\phi_{N}u_{xx}\|_{p}^{p}.
\end{align}

Now, together the inequalities (\ref{057}), (\ref{058}) with (\ref{059}) and then integrating, yields
\begin{align}
\|u&(t)\phi_{N}\|_{p}+\|(\partial_{x}u)\phi_{N}\|_{p}+\|(\partial_{xx}u)\phi_{N}\|_{p}\nonumber\\\nonumber&\leq (\|u_{0}\phi_{N}\|_{p}+\|(u_{0x})\phi_{N}\|_{p}+\|(u_{0xx})\phi_{N}\|_{p})\exp	(CMt)
\\&\leq (\|u_{0}\phi\|_{p}+\|(u_{0x})\phi\|_{p}+\|(u_{0xx})\phi\|_{p})\exp	(CMt),
\end{align}
for all $t\in[0,T]$.

Since $\phi_{N}(x) \rightarrow \phi(x)$ as $N\rightarrow \infty $ for a.e. $x\in\mathbb{R}$. Recalling that $u_{0}\phi \in L^{p}$, $u_{0x}\phi \in L^{p}$ and $u_{0xx}\phi \in L^{p}$, we get
\begin{align}
\|u&(t)\phi\|_{p}+\|(\partial_{x}u)\phi\|_{p}+\|(\partial_{xx}u)\phi\|_{p}\nonumber\\&\leq (\|u_{0}\phi\|_{p}+\|(u_{0x})\phi\|_{p}+\|(u_{0xx})\phi\|_{p})\exp(CMt),
\end{align}
for all $t\in[0,T]$.
Finally, we treat the case $p=\infty$. Noticing that $u_0,~u_{0x},~u_{0xx}\in L^2\cap L^\infty$ and $\phi_{N}\in L^\infty$, hence, we have
\begin{align}
\|u&(t)\phi_{N}\|_{q}+\|(\partial_{x}u)\phi_{N}\|_{q}+\|(\partial_{xx}u)\phi_{N}\|_{q}\nonumber\\&\leq (\|u_{0}\phi_{N}\|_{q}+\|(u_{0x})\phi_{N}\|_{q}+\|(u_{0xx})\phi_{N}\|_{q})\exp	(CMt)
\\\nonumber&\leq (\|u_{0}\phi_{N}\|_{\infty}+\|(u_{0x})\phi_{N}\|_{\infty}+\|(u_{0xx})\phi_{N}\|_{\infty})\exp(CMt),~~~q\in[2,\infty).
\end{align}

The last term in the right-hand side is independent of q. Since $\|\phi_{N}\|_{L^p}\rightarrow \|\phi_{N}\|_{L^\infty}$ as $p \rightarrow \infty$ for any $\phi_{N}\in L^\infty \cap L^2$, implies
\begin{align}
\|u&(t)\phi_{N}\|_{\infty}+\|(\partial_{x}u)\phi_{N}\|_{\infty}+\|(\partial_{xx}u)\phi_{N}\|_{\infty}\nonumber\\\nonumber&\leq (\|u_{0}\phi_{N}\|_{\infty}+\|(u_{0x})\phi_{N}\|_{\infty}+\|(u_{0xx})\phi_{N}\|_{\infty})\exp	(CMt)
\\&\leq (\|u_{0}\phi\|_{\infty}+\|(u_{0x})\phi\|_{\infty}+\|(u_{0xx})\phi\|_{\infty})\exp	 (CMt).
\end{align}

The last term in the right-hand side is independent of N. Now taking $N\rightarrow\infty$ implies that the estimate (2.6) remains valid for $p=\infty$.
\end{proof}

Indeed, for $a<0$, we have $\phi(x)\rightarrow 0$ as $|x|\rightarrow \infty$: the conclusion of Theorem \ref{thm5} remains true but we are not interested in this case. It is interesting in the following two particular cases:
\begin{rema}
(1) Power weights: Take $\phi=\phi_{0,0,c,0}$ with $c>0$. It is $v$-moderate, where $v=(1+|x|)^{c},$ $c<0.$ It can be deduced easily $v>0,~ve^{-|x|}\in L^{1}$. And sending $p=\infty$.

From Theorem \ref{thm5}, we obtain
$$|u(t,x)|+|\partial_{x}u(t,x)|+|\partial^2_{x}u(t,x)|\leq C'(1+|x|)^{-c}.
$$
with the condition
$$|u_0(x)|+|\partial_{x}u_{0}(x)|+|\partial^2_{x}u_{0}(x)|\leq C(1+|x|)^{-c}.$$

Thus, we obtain the persistence properties on algebraic decay rates of strong solutions to the (\ref{E1}).
\\(2) Exponential weights:
Choose $\phi=\phi_{a,1,0,0}.$ We deduce that $\phi(x)=e^{ax}$ if $x\geq0$ and $\phi(x)=1 $ if $x\leq0$ with $0\leq a<1$.
Such weight clearly satisfies the admissibility conditions of Definition \ref{def1}.

From Theorem \ref{thm5}, we have
$$|u(t,x)|+|\partial_{x}u(t,x)|+|\partial^2_{x}u(t,x)|\leq C'e^{-ax}.
$$
with the condition
$$|u_0(x)|+|\partial_{x}u_{0}(x)|+|\partial^2_{x}u_{0}(x)|\leq Ce^{-ax}.$$

Thus, we obtain the persistence properties on the point wise decay decay rates of strong solutions to the (\ref{E1}).
\end{rema}

The limit case $a = b = 1$ is not covered by Theorem \ref{thm5}.
In other words, To obtain an application of some $v$-moderate weights $\phi$ for which condition (\ref{052}) does not hold, we need to make a modification to Theorem \ref{thm5}. Instead of assuming (\ref{052}), we now put the weaker condition
\begin{align}\label{054}
ve^{-|\cdot|}\in L^p~~~~~where~~2\leq p\leq \infty.
\end{align}
See Theorem (\ref{thm6}) below, which covers the case of such fast growing weights.

\begin{theo}\label{thm6}
Let $2\leq p\leq \infty$ and $\phi$ be a $v$-moderate weight function as in Definition \ref{def1} satisfying the condition (\ref{054}) instead of (\ref{052}). Let also $u|_{t=0}=u_{0} $ satisfy
$$u_{0}\phi \in L^{p},~~~~u_{0}\phi^{\frac{1}{2}} \in L^{2},$$

$$(\partial_{x}u_{0})\phi \in L^{p},~~~~(\partial_{x}u_{0})\phi^{\frac{1}{2}} \in L^{2},$$
and
$$(\partial_{x}^2u_{0})\phi \in L^{p},~~~~(\partial^2_{x}u_{0})\phi^{\frac{1}{2}} \in L^{2}.$$

Let also $u\in C([0,T],H^s),s>\frac{5}{2}$, be the strong solution of the Cauchy problem for (\ref{E1}), emanating from $u_0$. Then,
$$\sup_{t\in[0,T]}	(\|u(t)\phi\|_{p}+\|\partial_{x}u(t)\phi\|_{p}+\|\partial^2_{x}u(t)\phi\|_{p})<\infty,$$
and
$$\sup_{t\in[0,T]}	(\|u(t)\phi^{\frac{1}{2}}\|_{2}+\|\partial_{x}u(t)\phi^{\frac{1}{2}}\|_{2}
+\|\partial^2_{x}u(t)\phi^{\frac{1}{2}}\|_{2})<\infty.$$
\end{theo}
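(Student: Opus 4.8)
The plan is to adapt the proof of Theorem \ref{thm5} by introducing a second weight $\phi^{1/2}$ and tracking both families of norms simultaneously, compensating for the failure of condition (\ref{052}) by a clever splitting of the convolution term. The key observation is that although $v e^{-|\cdot|}$ need no longer lie in $L^1$, the weaker hypothesis (\ref{054}) together with the submultiplicativity of $v$ still controls the nonlocal terms if we measure the convolution against $\phi^{1/2}$ rather than $\phi$. First I would verify that $\phi^{1/2}$ is itself a $v^{1/2}$-moderate admissible weight: since $v$ is submultiplicative so is $v^{1/2}$, and $|(\phi^{1/2})'| = \tfrac{1}{2}\phi^{-1/2}|\phi'| \leq \tfrac{A}{2}\phi^{1/2}$, so condition (2) of Definition \ref{def1} holds with constant $A/2$. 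Moreover $v^{1/2} e^{-|\cdot|/2} \in L^2$ follows from (\ref{054}) via Cauchy--Schwarz applied to the factorization $v^{1/2}e^{-|\cdot|/2} = (v e^{-|\cdot|})^{1/2}$, which gives the $L^1$-type control needed to run the weighted Young inequality of Proposition \ref{066} at the $\phi^{1/2}$ level.

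The main computational engine is to repeat the three energy estimates (\ref{057}), (\ref{058}), (\ref{059}) verbatim but now for the $L^2$ norms weighted by $\phi_N^{1/2}$, where $\phi_N = \min\{\phi, N\}$ are the truncations. The point is that for these estimates the relevant Young inequality reads
$$\|(G_x \ast g)\phi_N^{1/2}\|_2 \leq C_1 \|G_x v^{1/2}\|_{1}\, \|g\,\phi_N^{1/2}\|_2,$$
and $\|G_x v^{1/2}\|_1 = \tfrac12\|e^{-|\cdot|} v^{1/2}\|_1 \leq \tfrac12 \|e^{-|\cdot|/2}\|_2 \|e^{-|\cdot|/2} v^{1/2}\|_2 < \infty$ precisely by (\ref{054}) with $p=2$. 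Since the left-hand side local terms in (\ref{053}), its first $x$-derivative, and the equation (\ref{44}) all involve only products of $u,u_x,u_{xx}$ with $M$-bounded factors, integration by parts together with the bound $|\phi_N'|\leq A\phi_N$ (hence $|(\phi_N^{1/2})'|\leq \tfrac{A}{2}\phi_N^{1/2}$) yields the analogue of the differential inequality
$$\frac{\mathrm{d}}{\mathrm{d}t}\Big(\|u\phi_N^{1/2}\|_2+\|u_x\phi_N^{1/2}\|_2+\|u_{xx}\phi_N^{1/2}\|_2\Big)\leq CM\Big(\|u\phi_N^{1/2}\|_2+\|u_x\phi_N^{1/2}\|_2+\|u_{xx}\phi_N^{1/2}\|_2\Big).$$
Grönwall's inequality and the monotone-convergence passage $N\to\infty$ then deliver the second bound of the theorem, using the hypotheses $u_0\phi^{1/2},(\partial_x u_0)\phi^{1/2},(\partial_x^2 u_0)\phi^{1/2}\in L^2$.

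**The hard part will be** the first conclusion, namely the $L^p$ estimate with the full weight $\phi$ when (\ref{052}) fails. Here the convolution term $\|(G_x\ast g)\phi_N\|_p$ can no longer be bounded by Proposition \ref{066}(2) since that requires $\|G_x v\|_1<\infty$. My plan is to split the Young estimate by distributing the weight unevenly: write $\phi_N \leq C\, v^{1/2}(\cdot)\,\phi_N^{1/2}(\cdot)\cdot\phi_N^{1/2}(\cdot)$ heuristically, or more precisely exploit $v$-moderateness to obtain
$$\|(G_x\ast g)\phi_N\|_p \leq C\,\|G_x\, v^{1/2}\|_{1}\,\|g\,\phi_N\|_p^{1/2}\,\|g\,\phi_N^{1/2}\|_2^{?},$$
interpolating between the $\phi$-level $L^p$ norm and the already-controlled $\phi^{1/2}$-level $L^2$ norm. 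The cleanest route is to feed the uniform-in-$t$ bound for $\|g\,\phi_N^{1/2}\|_2$ obtained in the previous step as a fixed finite constant into the $L^p$ differential inequality, so that the convolution term becomes an inhomogeneous forcing term rather than a quantity requiring $\|G_xv\|_1$. This reduces the $\phi$-level estimate to a linear differential inequality of the form $\tfrac{\mathrm d}{\mathrm dt}Y(t)\leq CM\,Y(t)+CM\,K$, where $K$ is the finite $L^2$-bound, which Grönwall again closes. **The delicate point** is making the interpolation between the two weights rigorous while keeping the $N$-truncated quantities finite at every stage, and verifying that the constant $C$ indeed depends only on the admissibility data of $\phi$ and $v$; once the $L^2$-in-$\phi^{1/2}$ step is in hand, this decoupling is what turns a divergent convolution estimate into a bounded forcing term.
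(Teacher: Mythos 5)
Your high-level plan coincides with the paper's: make $\phi^{1/2}$ an admissible ($v^{1/2}$-moderate) weight, run the Theorem \ref{thm5} machinery for it at the level $p=2$, and then use those bounds to turn the convolution term in the $\phi$-weighted $L^p$ estimate into a bounded forcing term closed by Gr\"{o}nwall. But both steps on which this plan hinges are defective as written. For the first: you claim $v^{1/2}e^{-|\cdot|/2}\in L^{2}$ follows from (\ref{054}). Since $\|v^{1/2}e^{-|\cdot|/2}\|_{2}^{2}=\int v(x)e^{-|x|}\,dx$, that claim is literally condition (\ref{052}), i.e.\ exactly the hypothesis that Theorem \ref{thm6} dispenses with; it fails for the paper's motivating example $v=e^{|x|}$, $p=\infty$, where $v^{1/2}e^{-|\cdot|/2}\equiv 1\notin L^{2}$, so your bound $\|G_{x}v^{1/2}\|_{1}\le\tfrac{1}{2}\|e^{-|\cdot|/2}\|_{2}\|e^{-|\cdot|/2}v^{1/2}\|_{2}$ is vacuous. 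The fact you need, $\|G_{x}v^{1/2}\|_{1}<\infty$, is true but must be proved the way the paper does it: (\ref{054}) gives $v^{1/2}e^{-|\cdot|/2}=(ve^{-|\cdot|})^{1/2}\in L^{2p}$, and H\"{o}lder against $e^{-|\cdot|/2}\in L^{(2p)'}$ yields $v^{1/2}e^{-|\cdot|}\in L^{1}$.

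For the second, harder step you never produce the inequality that closes the argument: your proposed bound carries an undetermined exponent (a question mark), and your ``cleanest route'' of feeding $\|g\phi_{N}^{1/2}\|_{2}$ into the $L^{p}$ differential inequality cannot work as stated. Splitting the weight as $\phi_{N}(x)\le C\,\phi_{N}^{1/2}(x)\,v^{1/2}(x-y)\,\phi_{N}^{1/2}(y)$ leaves a stray factor $\phi_{N}^{1/2}(x)$ outside the convolution, removable only at the price of an $N$-dependent constant; and the alternative Young pairing $\|G_{x}v\|_{a}\|g\phi_{N}\|_{2}$ (with $1/a=1/2+1/p$) requires both the full weight on $g$ and an integrability of $G_{x}v$ that (\ref{054}) does not supply. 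What actually works, and is what the paper does, exploits the quadratic structure of the nonlinearity: each term $g$ is a product of two factors, each controlled in $\phi^{1/2}$-weighted $L^{2}$ by the first step, so Cauchy--Schwarz gives $\|g\phi\|_{1}\le Ke^{2CMt}$ (estimates (\ref{060})--(\ref{061})); then the weighted Young inequality is used with the roles of the two factors exchanged relative to Proposition \ref{066}(2), namely $\|(G_{x}\ast g)\phi_{N}\|_{p}\le C\|G_{x}v\|_{p}\,\|g\phi\|_{1}$, where $\|G_{x}v\|_{p}\le\tfrac{1}{2}\|e^{-|\cdot|}v\|_{p}$ is finite precisely by (\ref{054}). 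With that forcing-term bound, your Gr\"{o}nwall step, the passage $N\to\infty$, and the $p=\infty$ limit go through as in Theorem \ref{thm5}; without it, the proof has a genuine gap at its central point.
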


\begin{proof}
It is easy to observe that $\phi ^{\frac{1}{2}}$ is a $v^{\frac{1}{2}}$-moderate weight where $\inf_{x\in \mathbb{R}} v^{\frac{1}{2}}(x) > 0$, such that
$|(\phi ^{\frac{1}{2}})'(x)| \leq \frac{A}{2}\phi ^{\frac{1}{2}}$.
By condition (\ref{054}), $v^{\frac{1}{2}}e^{-\frac{|x|}{2}} \in L^{2p}$,
hence H\"{o}lder's inequality implies that $v^{\frac{1}{2}}e^{-|x|} \in L^1$. Then an application
of Theorem \ref{thm5} with $p = 2$ to the weight $\phi^{\frac{1}{2}}$, yields
\begin{align}
\|u(t)\phi^{\frac{1}{2}}\|_{2}+\|\partial_{x}u(t)\phi^{\frac{1}{2}}\|_{2}+\|\partial^2_{x}u(t)\phi^{\frac{1}{2}}\|_{2}
\leq (\|u_{0}\phi^{\frac{1}{2}}\|_{2}+\|u_{0x}\phi^{\frac{1}{2}}\|_{2}+\|u_{0xx}\phi^{\frac{1}{2}}\|_{2})e^{CMt},
\end{align}
which along with H\"{o}lder's inequality, implies
\begin{align}\label{060}
\|[(2u_x^2+6u^{2})+\partial_{x}(u^2_{x})]\phi\|_{1}\leq K_{0}e^{2CMt},
\end{align}
\begin{align}\label{061}
\|[\partial_{x}(2u_x^2+6u^{2})+u^2_{x}]\phi\|_{1}\leq K_{1}e^{2CMt}.
\end{align}
The constants $K_0$ and $K_1$ below depend only on $\phi$ and on the datum.
By the same argument as in the proof of Theorem \ref{thm5} (recall
that $\phi_{N} = \min\{\phi(x),N\}$) for $p < \infty$,  yields
\begin{align}\label{064}
&\frac{\mathrm{d}}{\mathrm{d}t}\|u\phi_{N}\|_{p} \leq CM\|\phi_{N}u\|_{p}+\|\phi_{N} \partial_{x}G\ast [(2u_{x}^2+6u^2)+\partial_{x}(u_{x}^2)]\|_{p},
\\&\frac{\mathrm{d}}{\mathrm{d}t}\|u_{x}\phi_{N}\|_{p} \leq CM\|\phi_{N}u_{x}\|_{L^{p}}+\|\phi_{N} \partial_{x}G\ast [\partial_{x}(2u_{x}^2+6u^2)+(u_{x}^2)]\|_{p},
\end{align}
and \begin{align}\label{063}
\frac{\mathrm{d}}{\mathrm{d}t}\|u_{xx}\phi_{N}\|_{p} \leq CM\|\phi_{N}u_{xx}\|_{p}+\|\phi_{N}\partial_{x} G_{}\ast [(2u_{x}^2+6u^2)+\partial_{x}(u_{x}^2)]\|_{p}.
\end{align}

Note that $|\partial_{x}G| \leq \frac{1}{2}e^{-|\cdot|}$. Then combining Proposition \ref{066}, the condition (\ref{054}) with the estimates (\ref{060})-(\ref{061}), we have
$$\|\phi_{N} \partial_{x}G_{}\ast [\partial_{x}(2u_{x}^2+6u^2)+(u_{x}^2)]\|_{p}\leq K_{2}e^{2CMt},$$
$$\|\phi_{N}\partial_{x} G_{}\ast [(2u_{x}^2+6u^2)+\partial_{x}(u_{x}^2)]\|_{p}\leq K_{3}e^{2CMt}.$$
The constants $K_{2},~K_3$ depend only on $\phi$ and on the datum.

Plugging the two last estimates in (\ref{064})-(\ref{063}), and summing we obtain
\begin{align}
\frac{\mathrm{d}}{\mathrm{d}t}(&\|u\phi_{N}\|_{p}+\|u_{x}\phi_{N}\|_{p}+\|u_{xx}\phi_{N}\|_{p})\nonumber\\\nonumber\leq &CM(\|u\phi_{N}\|_{p}+\|u_{x}\phi_{N}\|_{p}+\|u_{xx}\phi_{N}\|_{p})+2(K_{2}+K_{3})e^{2CMt}.
\end{align}
Integrating and finally as before letting $N \rightarrow \infty$, we can get the conclusion in the case $ 2 \leq p < \infty$.
The constants throughout the proof are independent on $p$. Therefore, as before, the result for infinite exponents $p = \infty$ can be established. The argument is fully similar to that of Theorem \ref{thm5}.

\end{proof}

\begin{rema}
Choosing $\phi(x)=\phi_{1,1,0,0}(x)=e^{|x|}.$ It is $v$-moderate with $v=e^{|x|}$. Let $p=\infty.$ it is easy to deduce
that $\frac{v}{e^{|x|}}\in L^{\infty}$.
 Applying Theorem \ref{thm6}, we get the strong solution such that
$$|u(x,t)|+|\partial_{x}u(x,t)|+|\partial^2_{x}u(x,t)|\leq Ce^{-|x|},$$
if $|u_{0}(x)|$, $|\partial_{x}u_{0}(x)|$ and $|\partial^2_{x}u_{0}(x)|$ are both bounded by $ce^{-|x|}$.
\end{rema}

\section{Exact asymptotic profiles}
In this section, we finish this paper with the proof of a asymptotic profile.
\begin{theo}\label{thm10}
Let $s > \frac{5}{2}$ and $u_{0} \in H^s$, $u_{0} \neq 0$, such that
\begin{align} \label{l61}
\sup_{x\in\mathbb{R}}
e^{\frac{|x|}{2}}(1 + |x|)^{\frac{1}{2}} \log(e + |x|)^d (|u_{0}(x)| + |(\partial_{x}u_0)(x)|+|(\partial^2_{x}u_0)(x)|)< \infty,
\end{align}
for some $d >\frac{1}{2}$. Then condition (\ref{l61}) is conserved uniformly in $[0, T]$ by the strong solution $u \in C([0, T],H^s)$ of the Camassa-Holm equation. Moreover, the following asymptotic
profiles (respectively for $x \rightarrow +\infty$ and $x \rightarrow -\infty$) hold:
\begin{align}\label{l68}
\left\{
\begin{array}{ll}
u(x, t) = &u_0(x) + e^{-x}t\bigg\{\Phi(t) + C_{1}(1+x)^{-1}\log(e+x)^{-2d}(x, t)\\[1ex]&+C_{2}[\log(1+x)]^{1-2d}+o([\log(1+x)]^{1-2d})\bigg\},\\[1ex]
u(x, t) = &u_0(x) - e^{x}t \bigg\{\Psi(t) +  C_{1}(1-x)^{-1}\log(e-x)^{-2d}(x, t)\\[1ex]&+C_{2}[\log(1-x)]^{1-2d}+o([\log(1-x)]^{1-2d})\bigg\}.
\end{array}
\right.
\end{align}
where, for all $t \in [0, T]$, some constants $C_1, ~C_2$ depend only on the datum  and some constants $c_1, ~c_2 > 0$ independent on $t$,
\begin{align}\label{l63}
 c_1 \leq \Phi(t) \leq c_2,~~~ c_1 \leq \Psi(t) \leq c_2.
\end{align}
\end{theo}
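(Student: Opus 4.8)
The plan is to separate the statement into two tasks: first the uniform conservation of the weighted bound (\ref{l61}), and then the extraction of the profiles (\ref{l68}). For the conservation, I would note that the weight implicit in (\ref{l61}) is exactly $\phi=\phi_{1/2,1,1/2,d}$, that is $\phi(x)=e^{|x|/2}(1+|x|)^{1/2}\log(e+|x|)^{d}$, and that (\ref{l61}) is the assertion $u_0\phi,\,(\partial_x u_0)\phi,\,(\partial_x^2 u_0)\phi\in L^\infty$. Since $a=\tfrac12\ge0$, $c=\tfrac12\ge0$, $d>\tfrac12\ge0$ and $b=1$, this $\phi$ is submultiplicative, hence $v$-moderate with $v=\phi$ and $C_0=1$; moreover $\inf_x\phi=\phi(0)=1>0$, and because $ab=\tfrac12<1$ one has $\int_{\mathbb R}\phi(x)e^{-|x|}\,dx<\infty$, so (\ref{052}) holds, while $b=1$ gives $|\phi'|\le A\phi$. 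Thus $\phi$ is admissible in the sense of Definition \ref{def1}, and Theorem \ref{thm5} with $p=\infty$ applies verbatim to give $\|u\phi\|_\infty+\|(\partial_x u)\phi\|_\infty+\|(\partial_x^2 u)\phi\|_\infty\le Ce^{CMt}$ on $[0,T]$, which is precisely the claimed uniform conservation of (\ref{l61}).

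For the profiles I would work from the Duhamel form of (\ref{E03}),
\[
u(x,t)=u_0(x)+\int_0^t\Big(4uu_x+G\ast N(u)\Big)(x,\tau)\,d\tau,\qquad N(u)=\partial_x(2u_x^2+6u^2)+\partial_x^2(u_x^2),
\]
and analyse the nonlocal term by inserting $G=\tfrac12e^{-|\cdot|}$ and splitting the convolution at the point $x$:
\[
(G\ast g)(x)=\tfrac12 e^{-x}\int_{-\infty}^{x} e^{y}g(y)\,dy+\tfrac12 e^{x}\int_{x}^{\infty} e^{-y}g(y)\,dy.
\]
As $x\to+\infty$ the first integral tends to $\int_{\mathbb R}e^{y}g\,dy$ and the second is of strictly lower order, so $(G\ast g)(x)=\tfrac12 e^{-x}\int_{\mathbb R}e^{y}g\,dy+\cdots$, with the mirror expansion in $e^{x}$ and $\int_{\mathbb R}e^{-y}g$ as $x\to-\infty$. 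Taking $g=N(u)$ and integrating the derivatives by parts (the boundary terms vanishing by the decay from part one) reduces the leading coefficient to a weighted energy $\int_{\mathbb R}e^{\mp y}(u_y^2+6u^2)\,dy$; integrating in time produces the factor $e^{-x}t$ (resp. $e^{x}t$) multiplying the corresponding time average, which I would name $\Phi(t)$ (resp. $\Psi(t)$).

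The correction terms fall out of the same decomposition. The transport contribution $4uu_x\approx4u_0u_{0x}$, together with the genuinely local term $-u_x^2$ produced by $\partial_x^2 G\ast(u_x^2)=G\ast(u_x^2)-u_x^2$, decays like $e^{-|x|}(1+|x|)^{-1}\log(e+|x|)^{-2d}$, which after factoring $e^{\mp x}$ yields the $C_1(1+x)^{-1}\log(e+x)^{-2d}$ term. The tail of the convolution, $\tfrac12 e^{-x}\int_x^\infty e^{y}N(u)\,dy$, has integrand of size $(1+y)^{-1}\log(e+y)^{-2d}$, so the substitution $w=\log(e+y)$ gives $\int_x^\infty\sim\frac{1}{2d-1}[\log(e+x)]^{1-2d}$, which is exactly the $C_2[\log(1+x)]^{1-2d}$ term and is where the hypothesis $d>\tfrac12$ is used to secure convergence. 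Finally the two-sided bound (\ref{l63}) follows because the leading coefficient is, up to sign, the strictly positive and finite weighted energy $\int_{\mathbb R}e^{\mp y}(u_y^2+6u^2)\,dy$: finiteness is the upper bound from part one, while the lower bound uses $u_0\neq0$ together with continuity of $t\mapsto\int_{\mathbb R}e^{\mp y}(u^2+u_x^2)\,dy$ on $[0,T]$ to keep it away from zero.

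The main obstacle, I expect, is controlling the two correction orders simultaneously and sharply. One must show that the tail of $G\ast N(u)$ is genuinely of order $[\log(e+x)]^{1-2d}$ rather than merely $O(1)$, which forces a sharp (not just integrable) use of the decay rate supplied by $\phi$, and one must verify that replacing $u$ by $u_0$ in the quadratic local terms, and discarding the wrong-side convolution integral, perturbs the profile only within the remainder $o([\log(1+x)]^{1-2d})$. Keeping both the nonlinear feedback $u-u_0=O(e^{-|x|}t)$ and the subleading convolution error strictly below these thresholds, uniformly on $[0,T]$, is the delicate bookkeeping on which the optimality of (\ref{l68}) rests.
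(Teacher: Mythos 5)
Your proposal is correct in substance and shares the paper's two-stage architecture: conservation of (\ref{l61}) by applying Theorem \ref{thm5} with $p=\infty$ to the weight $\phi_{1/2,1,1/2,d}$ (you make explicit the admissibility check the paper leaves implicit), then the integral form of (\ref{E03}), splitting of the kernel $G=\tfrac12 e^{-|\cdot|}$ at the point $x$, a time-averaged weighted integral as the leading coefficient, a tail estimate hinging on $d>\tfrac12$, and the hypothesis $u_0\neq 0$ for the bounds (\ref{l63}). The one genuine difference is where the term $G\star u_x^2$, arising from $\partial_x^2G\star(u_x^2)=G\star(u_x^2)-u_x^2$, is placed. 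The paper keeps $F(u)=6u^2+2u_x^2$ under $G_x\star$ and assigns $\int_0^t G\star(\partial_xu)^2\,ds$ to the correction of size $e^{-|x|}t(1+|x|)^{-1}\log(e+|x|)^{-2d}$; that intermediate bound cannot hold, because $e^{-|x-y|}\ge e^{-|x|}e^{-|y|}$ gives $(G\star u_x^2)(x)\ge\tfrac12 e^{-|x|}\int e^{-|y|}u_x^2\,dy$, i.e.\ a positive constant times $e^{-|x|}$ unless $u_x\equiv0$, so this contribution lives at the same order $e^{-x}$ as the leading profile and cannot carry the extra vanishing factor. Your integration by parts absorbs it into the leading coefficient instead, turning the paper's $\int e^{\mp y}(6u^2+2u_y^2)\,dy$ into $\int e^{\mp y}(6u^2+u_y^2)\,dy$; both quadratic forms are positive definite, so $u_0\neq0$ still yields (\ref{l63}), and your bookkeeping is the internally consistent one — it repairs, rather than reproduces, the defective step in the paper's proof (this is also coherent with Theorem \ref{thm11}, where regrouping into the perfect square $(\sqrt2 u_x+\sqrt6 u)^2$ changes the exclusion to $u_0\neq\lambda e^{-\sqrt3 x}$). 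The remaining differences are cosmetic: you obtain the $[\log(1+x)]^{1-2d}$ tail rate by the substitution $w=\log(e+y)$ in $\int_x^\infty(1+y)^{-1}\log(e+y)^{-2d}\,dy$ where the paper invokes L'Hospital's rule; you deduce finiteness of the leading coefficient directly from the weighted $L^\infty$ bound where the paper reapplies Theorem \ref{thm5} with $p=2$ and weight $e^{|x|/2}$; and the wrong-side convolution piece that worries you at the end is disposed of in the paper by the elementary bound $e^{2x-y}\le e^{y}$ for $y\ge x$, landing harmlessly in the $C_1$ correction.
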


\begin{proof}
A simple application of Theorem \ref{thm5} with $p = \infty$ and the weight
$\phi(x) = e^{\frac{|x|}{2}}(1 + |x|)^{\frac{1}{2}} \log(e + |x|)^d $ leads to the fact that condition (\ref{l61}) is conserved uniformly in [0, T] by the strong solution $u \in C([0, T],H^s).$

Integrating (\ref{E03}) over $[0,t]$, we obtain
\begin{align} \label{l62}
u(x, t) = \nonumber&u_0(x)+\int^{t}_{0}[4u(x, s) \partial_{x}u(x, s)-(\partial_{x}u(x, s))^2]ds
\\&-\int^{t}_{0}G \star ( \partial_{x}u)^{2}(x, s) ds+\int^{t}_{0}G_{x} \star F(u)(x, s) ds,
\end{align}
where $F(u) = 6u^2 + 2 (\partial_{x}u)^2$.

For $t \in [0, T]$, applying (\ref{l61}), we get
\begin{align}
\nonumber&|\int^{t}_{0}[4u(x, s) \partial_{x}u(x, s)-(\partial_{x}u(x, s))^2]ds|\leq \frac{C_{1}}{2}e^{-|x|}t(1+|x|)^{-1}\log(e+|x|)^{-2d},
\\\nonumber&|\int^{t}_{0}G \star ( \partial_{x}u)^{2}(x, s) ds|\leq \frac{C_{1}}{2}e^{-|x|}t(1+|x|)^{-1}\log(e+|x|)^{-2d}.
\end{align}

For $0 < t \leq T$, assume that
$$h(x, t) =\frac{1}{t}\int^{t}_{0}F(u)(x, s) ds,$$
and
$$\Phi(t) =
\frac{1}{2}
\int_{-\infty}^{+\infty}
e^yh(y, t) dy,~~~~~ \Psi(t) =
\frac{1}{2}
\int_{-\infty}^{+\infty}
e^yh(y, t) dy.$$
The function $(1 + | \cdot |)^{-\frac{1}{2}} \log(e + |\cdot|)^{-d}$ belongs to $L^2$. Then we have $e^{\frac{|x|}{2}}(|u_{0}(x)| + |(\partial_{x}u_0)(x)|+|(\partial^2_{x}u_0)(x)|)\in L^2.$ Applying Theorem \ref{thm5} with $p = 2$ and the weight $\phi(x) = e^{\frac{|x|}{2}}$ yields
$\int_{\mathbb{R}}e^{|y|}h(y, t) dy < \infty$.

Because $\Phi$ and $\Psi$ is continuous at $t = 0$, we give the definition
 $$\Phi(0) = \frac{1}{2}
\int_{-\infty}^{+\infty}e^{y}F(u_{0})(y) dy~~
and ~~\Psi(0) = \frac{1}{2}
\int_{-\infty}^{+\infty}e^{-y}F(u_{0})(y) dy.$$

The assumption $u_0\neq 0$ and $u_{0}\in H^s,~~s>\frac{5}{2}$,
ensures the validity of estimates (\ref{l63}) with $c_1, c_2 > 0$ for all $t\in [0, T]$.

Now using $\partial_{x}G(x- y) = -\frac{1}{2} sign(x - y)e^{-|x-y|}$ we get
\begin{align}
\nonumber-\int_{0}^{t}
\partial_{x}G \star F(u)(s) ds
&=\frac{e^{-x}t}{2}
\int_{-\infty}^{x}
e^{y}h(y, t) dy -
\frac{e^{x}t}{2}
\int_{x}^{+\infty}
e^{-y}h(y, t) dy
\\\nonumber&= e^{-x}t
[\Phi(t)-
\frac{1}{2}\int_{x}^{+\infty}
(e^{y} + e^{2x-y})h(y, t) dy].
\end{align}

But, by the dominated convergence theorem, it follows
\begin{align}
0 \leq
\int_{x}^{+\infty}
(e^y + e^{(2x-y)})h(y, t) dy \leq 2\int_{x}^{+\infty}e^y h(y, t) dy\rightarrow 0,~~~ as ~~x \rightarrow +\infty.
\end{align}
In the same way,
$$-\int_{0}^{t}
\partial_{x}G \star F(u)(s) ds = -e^{x}t[\Psi(t) -\frac{1}{2}\int_{-\infty}^{x}(e^{-y} + e^{y-2x})h(y, t) dy],
$$
and
$$
0\leq
\int_{x}^{-\infty}(e^{-y} + e^{y-2x})h(y, t) dy \leq 2
\int_{x}^{-\infty}2e^{-y}h(y, t) dy\rightarrow 0,~~~ as~ ~x \rightarrow-\infty .$$
By Hospital's Rule, we have
\begin{align}
\nonumber\lim_{x\rightarrow\infty}&\frac{\int_{x}^{+\infty}e^{|y|}h(t,y)dy}{[\log(1+|x|)]^{1-2d}}\\
=&\frac{-e^{|x|}h(t,x)}{(1-2d)[\log(1+|x|)]^{-2d}(1+|x|)^{-1}sign(x)}
\nonumber\\
\leq&C_{2}.
\end{align}

Thus, we complete the asymptotic profile of (\ref{l68}).

\end{proof}

Applying Theorem  \ref{thm10} we immediately obtain the fact that only the zero solution can be compactly supported (or decay faster than $e^{-|x|}$) at two different times $t_0$ and $t_1$.

\begin{theo}\label{thm11}
Let $s > \frac{5}{2}$ and $u_{0} \in H^s$, $u_{0} \neq \lambda e^{-\sqrt{3}x}$, such that
\begin{align} \label{l65}
\sup_{x\in\mathbb{R}}
e^{\frac{|x|}{2}}(1 + |x|)^{\frac{1}{2}} \log(e + |x|)^d (|u_{0}(x)| + |(\partial_{x}u_0)(x)|+|(\partial^2_{x}u_0)(x)|)< \infty,
\end{align}
for some $d >\frac{1}{2}$. Then condition (\ref{l65}) is conserved uniformly in $[0, T]$ by the strong solution $u \in C([0, T],H^s)$ of the Camassa-Holm equation. Moreover, the following asymptotic
profiles (respectively for $x \rightarrow +\infty$ and $x \rightarrow -\infty$) hold:
\begin{align}\label{l67}
\left\{
\begin{array}{ll}
u(x, t) = &u_0(x) + e^{-x}t\bigg\{\Phi(t) + C_{1}(1+x)^{-1}\log(e+x)^{-2d}(x, t)\\[1ex]&+C_{2}[\log(1+x)]^{1-2d}+o([\log(1+x)]^{1-2d})\bigg\},\\[1ex]
u(x, t) = &u_0(x) - e^{x}t \bigg\{\Psi(t) +  C_{1}(1-x)^{-1}\log(e-x)^{-2d}(x, t)\\[1ex]&+C_{2}[\log(1-x)]^{1-2d}+o([\log(1-x)]^{1-2d})\bigg\}.
\end{array}
\right.
\end{align}
where, for all $t \in [0, T]$, some constants $C_1, ~C_2$ depend only on the datum  and some constants $c_1, ~c_2 > 0$ independent on $t$,
\begin{align}\label{l64}
 c_1 \leq \Phi(t) \leq c_2,~~~ c_1 \leq \Psi(t) \leq c_2.
\end{align}
\end{theo}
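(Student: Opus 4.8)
The plan is to argue exactly as in the proof of Theorem \ref{thm10}, since the conserved condition (\ref{l65}), the target profiles (\ref{l67}) and the bounds (\ref{l64}) coincide verbatim with those in Theorem \ref{thm10}; the only change is that the non-triviality hypothesis is relaxed from $u_0\neq0$ to $u_0\neq\lambda e^{-\sqrt3 x}$. First I would propagate the weighted decay: applying Theorem \ref{thm5} with $p=\infty$ to the admissible weight $\phi(x)=e^{|x|/2}(1+|x|)^{1/2}\log(e+|x|)^{d}$ shows that (\ref{l65}) is conserved uniformly on $[0,T]$, and applying it with $p=2$ to the weight $e^{|x|/2}$ places $e^{|x|/2}(|u|+|\partial_x u|+|\partial_x^2 u|)$ in $L^2$. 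These are precisely the integrability facts that make the integrals below finite.

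Next I would integrate (\ref{E03}) in time to obtain the Duhamel representation
\begin{align}
\nonumber u(x,t)=u_0(x)+\int_0^t\big[4u\partial_x u-(\partial_x u)^2\big]ds-\int_0^t G\star(\partial_x u)^2\,ds+\int_0^t G_x\star F(u)\,ds,
\end{align}
with $F(u)=6u^2+2(\partial_x u)^2$. The first two time integrals are of lower order: the conserved bound forces them to be $O\big(e^{-|x|}t(1+|x|)^{-1}\log(e+|x|)^{-2d}\big)$, so they only feed the $C_1$-term of (\ref{l67}). The dominant contribution comes from $\int_0^t G_x\star F(u)\,ds$, which I would split using the explicit kernel $\partial_x G(x-y)=-\tfrac12\,\mathrm{sign}(x-y)e^{-|x-y|}$. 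Setting $h(x,t)=\tfrac1t\int_0^t F(u)\,ds\ge0$, $\Phi(t)=\tfrac12\int_{\mathbb R}e^y h(y,t)\,dy$ and $\Psi(t)=\tfrac12\int_{\mathbb R}e^{-y}h(y,t)\,dy$, this produces the leading factor $e^{-x}t\Phi(t)$ as $x\to+\infty$ (respectively $e^{x}t\Psi(t)$ as $x\to-\infty$) together with a remainder $\int_x^{+\infty}(e^y+e^{2x-y})h\,dy$ that vanishes by dominated convergence.

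It then remains to establish the two-sided bounds (\ref{l64}) and to extract the sharp logarithmic tail. Since $F(u)\ge0$, both $\Phi$ and $\Psi$ are nonnegative, and the conserved weighted estimate yields the uniform upper bound $c_2$; the sharp tail $C_2[\log(1\pm x)]^{1-2d}$ then follows from L'Hospital's rule applied to $\int_x^{\pm\infty}e^{|y|}h(y,t)\,dy$ against $[\log(1+|x|)]^{1-2d}$, which is where $d>\tfrac12$ is used. The main obstacle is the strict lower bound $c_1>0$: here the hypothesis $u_0\neq\lambda e^{-\sqrt3 x}$ takes the place of $u_0\neq0$, and the crux is to verify that $\lambda e^{-\sqrt3 x}$ is exactly the exponential family for which the leading coefficients degenerate, so that outside it one has $\Phi(0),\Psi(0)>0$, and then to propagate this strict positivity to all $t\in[0,T]$ by continuity of $t\mapsto(\Phi(t),\Psi(t))$ together with the non-degeneracy of $h(\cdot,t)$ carried by the flow. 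Confirming that this single exceptional profile is the right one, and that it cannot be reached under the evolution from an admissible datum, is the delicate point of the argument.
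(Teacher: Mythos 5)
There is a genuine gap, and it is exactly at the point you flag as "the delicate point." Your plan reuses the decomposition of Theorem \ref{thm10} verbatim, with $F(u)=6u^2+2(\partial_x u)^2$ and $\Phi(t)=\tfrac12\int e^y h(y,t)\,dy$, $h=\tfrac1t\int_0^t F(u)\,ds$, and defers to the end the verification that $\lambda e^{-\sqrt{3}x}$ is "exactly the exponential family for which the leading coefficients degenerate." Under your decomposition that verification is impossible: since $F(u)\geq 0$ vanishes pointwise iff $u=0$, the coefficient $\Phi(t)$ degenerates iff $u\equiv 0$ on $[0,t]$, so the degenerate set is $\{0\}$, not the family $\lambda e^{-\sqrt{3}x}$. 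The hypothesis $u_{0}\neq\lambda e^{-\sqrt{3}x}$ therefore never enters your argument in a meaningful way (for $u_0\in H^s$ it is anyway equivalent to $u_0\neq 0$, because $e^{-\sqrt{3}x}\notin H^s$), and what you obtain is a restatement of Theorem \ref{thm10}, with $\Phi,\Psi$ given by the Theorem \ref{thm10} formulas — not the profile coefficients that Theorem \ref{thm11} is actually about.

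The missing idea in the paper's proof is an algebraic regrouping of the equation before integrating in time: (\ref{E03}) is rewritten as (\ref{e03}), i.e. the cross term $4\sqrt{3}\,uu_x=2\sqrt{3}\,\partial_x(u^2)$ is shifted between the local part and the convolution part (using $G_x\star\partial_x f=G\star f-f$) so that the nonlocal forcing becomes a perfect square, $G_{x}\star[(\sqrt{2}\,u_x+\sqrt{6}\,u)^2]$. The paper then sets $h^2(x,t)=\tfrac1t\int_0^t(\sqrt{2}\,u_x+\sqrt{6}\,u)^2\,ds$ and defines $\Phi,\Psi$ as exponential moments of $h^2$; these vanish precisely when $\sqrt{2}\,u_x+\sqrt{6}\,u\equiv 0$, i.e. when $u=\lambda e^{-\sqrt{3}x}$, which is exactly where the hypothesis $u_0\neq\lambda e^{-\sqrt{3}x}$ is used (and, since only $\lambda=0$ is compatible with $H^s$, positivity of $\Phi,\Psi$ propagates to all $t\in[0,T]$, settling your worry about the family being "reached" by the flow). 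The remaining steps — persistence via Theorem \ref{thm5} with $p=\infty$ and $p=2$, the kernel splitting, dominated convergence, and L'Hospital's rule using $d>\tfrac12$ — are as you describe and carry over unchanged; but without the perfect-square rewriting, the lower bound $c_1>0$ in (\ref{l64}) for the intended $\Phi,\Psi$ cannot be established, so the proposal does not prove Theorem \ref{thm11}.
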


\begin{proof}
A simple application of Theorem \ref{thm5} with $p = \infty$ and the weight
$\phi(x) = e^{\frac{|x|}{2}}(1 + |x|)^{\frac{1}{2}} \log(e + |x|)^d $ leads to the fact that condition (\ref{l65}) is conserved uniformly in [0, T] by the strong solution $u \in C([0, T],H^s).$

We can rewritten (\ref{E03}) as follows:
\begin{align}\label{e03}
u_t(t,x)=4uu_x-u^2_{x}+\sqrt{3}u^2-G\star[u^2_{x}-\sqrt{3}u^2]+G_{x}\star[(\sqrt{2}u_x+\sqrt{6} u)^2].
\end{align}

Integrating (\ref{e03}) over $[0,t]$, we obtain
\begin{align}
u(x, t) = \nonumber&u_0(x)+\int^{t}_{0}[4u(x, s) \partial_{x}u(x, s)-(\partial_{x}u(x, s))^2+\sqrt{3}u^2]ds
\\&-\int^{t}_{0}G \star [( \partial_{x}u)^{2}-\sqrt{3}u^2](x, s) ds+\int^{t}_{0}G_{x} \star[(\sqrt{2}u_x+\sqrt{6} u)^2](x, s) ds.
\end{align}

For $t \in [0, T]$, applying (\ref{l65}), we get
\begin{align}
\nonumber&|\int^{t}_{0}[4u(x, s) \partial_{x}u(x, s)-(\partial_{x}u(x, s))^2+\sqrt{3}u^2]ds|\leq \frac{C_{1}}{2}e^{-|x|}t(1+|x|)^{-1}\log(e+|x|)^{-2d},
\\\nonumber&|\int^{t}_{0}G \star [( \partial_{x}u)^{2}-\sqrt{3}u^2](x, s) ds|\leq \frac{C_{1}}{2}e^{-|x|}t(1+|x|)^{-1}\log(e+|x|)^{-2d}.
\end{align}

For $0 < t \leq T$, assume that
$$h(x, t) =\frac{1}{t^{\frac{1}{2}}}[\int^{t}_{0}(\sqrt{2}u_x+\sqrt{6} u)^2(x, s) ds]^{\frac{1}{2}},$$
and
$$\Phi(t) =
\frac{1}{2}
\int_{-\infty}^{+\infty}
e^yh(y, t) dy,~~~~~ \Psi(t) =
\frac{1}{2}
\int_{-\infty}^{+\infty}
e^yh(y, t) dy.$$
The function $(1 + | \cdot |)^{-\frac{1}{2}} \log(e + |\cdot|)^{-d}$ belongs to $L^2$. Then we have $e^{\frac{|x|}{2}}(|u_{0}(x)| + |(\partial_{x}u_0)(x)|+|(\partial^2_{x}u_0)(x)|)\in L^2.$ Applying Theorem \ref{thm5} with $p = 2$ and the weight $\phi(x) = e^{\frac{|x|}{2}}$ yields
$\int_{\mathbb{R}}e^{|y|}h(y, t) dy < \infty$.

Because $\Phi$ and $\Psi$ is continuous at $t = 0$, we give the definition
 $$\Phi(0) = \frac{1}{2}
[\int_{-\infty}^{+\infty}e^{y}(\sqrt{2}u_{0x}+\sqrt{6} u_{0})^2(y) dy]^{\frac{1}{2}}~~
and ~~\Psi(0) = \frac{1}{2}
[\int_{-\infty}^{+\infty}e^{-y}(\sqrt{2}u_{0x}+\sqrt{6} u_{0})^2(y) dy]^{\frac{1}{2}}.$$

The assumption $u_0\neq \lambda e^{-\sqrt{3}x}$ and $u_{0}\in H^s,~~s>\frac{5}{2}$, ensures the validity of estimates (\ref{l64}) with $c_1, c_2 > 0$ for all $t\in [0, T]$.

Now using $\partial_{x}G(x- y) = -\frac{1}{2} sign(x - y)e^{-|x-y|}$ we get
\begin{align}
\nonumber-\int_{0}^{t}
\partial_{x}G \star (\sqrt{2}u_{0x}+\sqrt{6} u_{0})^2(s) ds
&=\frac{e^{-x}t}{2}
\int_{-\infty}^{x}
e^{y}h^2(y, t) dy -
\frac{e^{x}t}{2}
\int_{x}^{+\infty}
e^{-y}h^2(y, t) dy
\\\nonumber&= e^{-x}t
[\Phi(t)-
\frac{1}{2}\int_{x}^{+\infty}
(e^{y} + e^{2x-y})h^2(y, t) dy].
\end{align}

But, by the dominated convergence theorem, it follows
\begin{align}
0 \leq
\int_{x}^{+\infty}
(e^y + e^{(2x-y)})h^2(y, t) dy \leq 2\int_{x}^{+\infty}e^y h^2(y, t) dy\rightarrow 0,~~~ as ~~x \rightarrow +\infty.
\end{align}
In the same way,
$$-\int_{0}^{t}
\partial_{x}G \star (\sqrt{2}u_{0x}+\sqrt{6} u_{0})^2(s) ds = -e^{x}t[\Psi(t) -\frac{1}{2}\int_{-\infty}^{x}(e^{-y} + e^{y-2x})h^2(y, t) dy],
$$
and
$$
0\leq
\int_{x}^{-\infty}(e^{-y} + e^{y-2x})h^2(y, t) dy \leq 2
\int_{x}^{-\infty}2e^{-y}h^2(y, t) dy\rightarrow 0,~~~ as~ ~x \rightarrow-\infty .$$
By Hospital's Rule, we have
\begin{align}
\nonumber\lim_{x\rightarrow\infty}&\frac{\int_{x}^{+\infty}e^{|y|}h^2(t,y)dy}{[\log(1+|x|)]^{1-2d}}\\
=&\frac{-e^{|x|}h^2(t,x)}{(1-2d)[\log(1+|x|)]^{-2d}(1+|x|)^{-1}sign(x)}
\nonumber\\
\leq&C_{2}.
\end{align}

Thus, we complete the asymptotic profile of (\ref{l67}).

\end{proof}

\noindent\textbf{Acknowledgements.} This work was
partially supported by NNSFC (No.11271382), RFDP (No.
20120171110014), the Macao Science and Technology Development Fund (No. 098/2013/A3) and the key project of Sun Yat-sen University.

\phantomsection
\addcontentsline{toc}{section}{\refname}
%Ìí¼Ó²Î¿¼ÎÄÏ×µ½ÊéÇ©£¬ºê°ü hyperref

\end{document}